\renewcommand{\MR}[1]{\href{http://www.ams.org/mathscinet-getitem?mr=#1}{MR#1}}
\numberwithin{equation}{section}
\newcommand{\Span}{\mathrm{Span}}
\newcommand{\Nor}{\mathrm{Nor}}
\newcommand{\Alg}{\mathbb{A}}
\newcommand{\cA}{\mathcal{A}}
\newcommand{\cF}{\mathcal{F}}
\newcommand{\cN}{\mathcal{N}}
\newcommand{\cT}{\mathcal{T}}
\newcommand{\cU}{\mathcal{U}}
\newcommand{\Lcal}{{\mathcal{L}}}
\newcommand{\Rcal}{{\mathcal{R}}}
\newcommand{\asX}{\langle X_n \rangle}
\newcommand{\asoX}{\langle X \rangle}
\newcommand{\N}{\mathbb{N}}
\newcommand{\Fcal}{{\mathcal{F}}}
\newcommand{\Ncal}{{\mathcal{N}}}
\def\id{\text{id}}
\theoremstyle{plain}
\newtheorem{thm}{Theorem}[section]
\newtheorem{pro}[thm]{Proposition}
\newtheorem{lem}[thm]{Lemma}
\newtheorem{cor}[thm]{Corollary}
\theoremstyle{definition}
\newtheorem{ex}[thm]{Example}
\newtheorem{facts}[thm]{Facts}
\newtheorem{defnotation}[thm]{Definition-Notation}
\newtheorem{pronotation}[thm]{Proposition-Notation}
\newtheorem{convention}[thm]{Convention}
\newtheorem{dfn}[thm]{Definition}
\newtheorem{que}[thm]{Question}
\newtheorem{rmk}[thm]{Remark}
\newtheorem{problem}[thm]{Problem}
\newcommand{\LM}{\mathbf{LM}}
\title[Segre products of Yang-Baxter algebras]{Segre products and
Segre morphisms in a class of  Yang-Baxter algebras}
\keywords{Quadratic algebras, PBW algebras, Koszul algebras, Segre products, Segre maps,
Yang-Baxter equation, Yang-Baxter algebras}
\subjclass{Primary 16S37, 16T25, 16S38, 16S15, 81R60}
\author{Tatiana Gateva-Ivanova}
\address{Max Planck Institute for Mathematics, Vivatsgasse 7, 53111 Bonn, Germany,
and American University in
Bulgaria, 2700 Blagoevgrad, Bulgaria} \email{tatyana@aubg.edu}
\thanks{ Partially supported by the Max Planck Institute for Mathematics, (MPIM),
Bonn, by ICTP, Trieste, and
by Grant KP-06 N 32/1, 07.12.2019 of the Bulgarian National Science Fund.
}
\begin{document}
\date{\today}

\begin{abstract}
Let $(X,r_X)$ and $(Y,r_Y)$ be finite nondegenerate involutive set-theoretic
solutions of  the Yang-Baxter equation, and let $A_X =  \cA(\textbf{k}, X, r_X)$ and $A_Y=  \cA(\textbf{k}, Y, r_Y)$
 be their quadratic Yang-Baxter algebras over a field $\textbf{k}.$
We find an explicit presentation of the Segre product $A_X\circ A_Y$ in terms of one-generators
and quadratic relations. We introduce analogues of Segre maps in the class of Yang-Baxter algebras
and find their images and their kernels.
The results agree with their classical analogues in the commutative case.
   \end{abstract}
\maketitle

\section{Introduction}
\label{Intro}

 It was established in the last three decades that
solutions of the linear braid or Yang-Baxter equations (YBE)
\[r^{12}r^{23}r^{12}=r^{23}r^{12} r^{23} \]
on
a vector space of the form $V^{\otimes 3}$ lead to remarkable
algebraic structures. Here $r : V\otimes V \longrightarrow
V\otimes V,$  $r^{12}= r\otimes \id$, $r^{23} = id\otimes r$ is a
notation and structures include coquasitriangular bialgebras
$A(r)$, their quantum group (Hopf algebra) quotients, quantum
planes and associated objects, at least in the case of specific
standard solutions, see \cite{MajidQG, RTF}. On the other hand,
the variety of all solutions on vector spaces of a given dimension
has remained rather elusive in any degree of generality. It was
proposed by V.G. Drinfeld \cite{D}, to consider the same equations
in the category of sets, and in this setting numerous  results
were found. It is clear that a set-theoretic solution extends to a
linear one, but more important than this is that set-theoretic
solutions lead to their own remarkable algebraic and combinatoric
structures, only somewhat analogous to quantum group
constructions. In the present paper we continue our systematic
study of set-theoretic solutions based on the associated quadratic
algebras that they generate.

In this paper "a solution of YBE" , or shortly, "a solution" means "a
nondegenerate involutive set-theoretic solution of YBE", see Definition
\ref{def:quadraticsets_All}.

The  Yang-Baxter algebras $\cA_X= \cA(\textbf{k}, X, r)$ related to solutions $(X,
r)$ of finite order $n$
 will play a central role in the paper.
It was proven in \cite{GIVB} and \cite{GI12}  that
these are quadratic algebras with remarkable algebraic, homological and
combinatorial properties: they are noncommutative, but preserve the good
properties of the commutative polynomial rings $\textbf{k} [x_1, \cdots , x_n]$.
Each such an algebra $\cA$ has finite global dimension and polynomial growth, it is
Koszul and a Noetherian domain. In the special case when $(X,r)$ is square-free,
$\cA$ is an Artin-Schelter regular PBW algebra (has a basis of Poincar\'{e}-Birkhoff-Witt type)  with
respect to some enumeration $X= \{x_1, \dots, x_n\}$ of $X$.
More
precisely, $\cA$ is a binomial skew polynomial ring in the sense of \cite{GI96}
which implies its good combinatorial and computational properties (the use of
noncommutative Gr\"{o}bner bases). Conversely, every binomial skew polynomial ring in the sense of \cite{GI96}
defines via its quadratic relations  a square-free solutions $(X, r)$ of YBE.
It seems interesting to find more
analogues coming from commutative algebra and algebraic geometry.
The  algebras $\cA_X= \cA(\textbf{k}, X, r)$  associated to  multipermutation
(square-free) solutions of
level two were studied in  \cite{GIM11}, we referred to them as 'quantum spaces'.
In this special case
a first stage of noncommutative geometry on $\cA_X= \cA(\textbf{k}, X, r)$  was
proposed, see  \cite{GIM11}, Section 6.
In \cite{AGG} a class of particular quadratic PBW
algebras called "noncommutative projective spaces" were studied
and analogues of  Veronese and Segre morphisms between noncommutative projective spaces were
introduced and studied.
In \cite{GI22} we studied the Veronese subalgebras
of Yang-Baxter algebras $\cA= \cA(\textbf{k}, X, r)$ related to finite solutions $(X,
r)$ and introduced noncommutative analogues of the Veronese maps in the class of Yang-Baxter algebras of finite solutions.

In the present paper we consider the following problem.
\begin{problem}
\label{problem}
Let $(X,r_X)$ and $(Y, r_Y)$ be finite nondegenerate symmetric sets whose Yang-Baxter algebras are $A = \cA(\textbf{k}, X, r)$ and $B = \cA(\textbf{k}, Y, r_Y),$
respectively.
\begin{enumerate}
\item
Find a presentation of the Segre product  $A\circ B$
 in terms of one-generators and linearly independent quadratic relations.
\item Introduce analogues of Segre maps for the class
of Yang-Baxter algebras of finite nondegenerate symmetric sets.
\item Study the special case when $(X,r_X)$ and $(Y, r_Y)$ are square-free solutions.
\end{enumerate}
\end{problem}

Our main results are Theorem \ref{thm:rel_segre}, Theorem  \ref{thm:segremap} and Theorem  \ref{thm:segre_square-free} which solve completely the problem.

The paper is organized as follows. In Section 2 we recall basic definitions and
facts used throughout the paper.
In Section \ref{sec:YBalgebras} we consider the quadratic algebra
$\cA(\textbf{k},  X, r)$ of a finite nondegenerate
symmetric set $(X,r)$.
 We fix the main settings and conventions and collect some of the most important
properties of the Yang-Baxter algebras
$\cA_X= \cA(\textbf{k}, X, r)$  used in the paper.
In Section \ref{Sec:SegreProduct} we study the Segre product $A\circ B$ of the Yang-Baxter algebras
$A=\cA(\textbf{k}, X, r_1)$ and $B=\cA(\textbf{k}, Y, r_2)$ of two finite solutions $(X, r_1)$ and $(Y, r_2)$, respectively.  We prove Theorem  \ref{thm:rel_segre} which gives an explicit finite presentation  of  the Segre product $A\circ B$
in terms of one-generators and linearly independent quadratic relations.
In Section \ref{Sec:SegreMaps}
we introduce analogues of Segre morphisms $s_{m,n}$ for quantum spaces $A=\cA(\textbf{k}, X, r_1)$ and $B=\cA(\textbf{k}, Y, r_2)$ related to
finite solutions $(X,r_1)$ and $(Y,r_2)$ of orders $m$ and $n$, respectively. We involve an abstract solution
$(Z,r_Z)$ of order $mn$ which is isomorphic to the Cartesian product of solutions $(X\times Y, r_{X\times Y})$ and define
the Segre map $s_{m,n}\;  \cA(\textbf{k}, Z, r_Z) \longrightarrow A \otimes B$.
Theorem
\ref{thm:segremap} shows that the image of the  map $s_{m,n}$ is the Segre product $A\circ B$ and describes explicitly a minimal set of
generators for its kernel.
Corollary \ref{cor:SegreProdNoetherian} shows that the Segre product $A\circ B$ is left and right Noetherian.
The results agree with their classical analogues in the commutative case, \cite{Harris}.
We end the section with an open question, see Question \ref{que}.
In Section \ref{sec:square-free} we pay special attention to the class of square-free solutions. In the case when $(X, r_1)$ and $(Y, r_2)$
are finite square-free solutions, the algebras $A=\cA(\textbf{k}, X, r_1)$ and $B=\cA(\textbf{k}, Y, r_2)$  are binomial skew polynomial algebras with respect to proper enumerations of $X$ and $Y$.
Theorem  \ref{thm:segre_square-free} shows that in this case the Segre product  $A\circ B$ is a PBW algebra and gives an explicit standard finite presentation in terms of PBW generators and quadratic relations which form a (noncommutative) Gr\"{o}bner basis.
Moreover, the Segre maps are well defined in the subclass of YB-algebras related to square-free solutions.
In Section \ref{sec:examples} we give an example which illustrates our results.

\section{Preliminaries}
\label{seq:preliminaries}
Let $X$ be a non-empty set, and let $\textbf{k}$ be a field.
We denote by $\asoX$
the free monoid
generated by $X,$ where the unit is the empty word denoted by $1$, and by
$\textbf{k}\asoX$-the unital free associative $\textbf{k}$-algebra
generated by $X$. For a non-empty set $F
\subseteq \textbf{k}\asoX$, $(F)$ denotes the two sided ideal
of $\textbf{k}\asoX$ generated by $F$.
When the set $X$ is finite, with $|X|=n$,  and ordered, we write $X= \{x_1,
\dots, x_n\}$
and fix the degree-lexicographic order $<$ on $\asoX$, where $x_1< \dots
<x_n$. As usual, $\N$ denotes the set of all positive integers,
and $\N_0$ is the set of all non-negative integers.

We shall consider associative graded $\textbf{k}$-algebras.
Suppose $A= \bigoplus_{m\in\N_0}  A_m$ is a graded $\textbf{k}$-algebra
such that $A_0 =\textbf{k}$, $A_pA_q \subseteq A_{p+q}, p, q \in \N_0$, and such
that $A$ is finitely generated by elements of positive degree. Recall that its
Hilbert function is $h_A(m)=\dim A_m$
and its Hilbert series is the formal series $H_A(t) =\sum_{m\in\N_0}h_{A}(m)t^m$.
In particular, the algebra $\textbf{k} [X]=\textbf{k} [x_1, \cdots, x_n]$ of commutative polynomials satisfies
\begin{equation}
\label{eq:hilbert}
h_{\textbf{k} [X]}(d)= \binom{n+d-1}{d}= \binom{n+d-1}{n-1} \quad\mbox{and}\quad
H_{\textbf{k} [X]}= \frac{1}{(1 -t)^{n}}.
\end{equation}
We shall use the \emph{natural grading by length} on the free associative algebra
$\textbf{k}\asoX$.
For $m \geq 1$,  $X^m$ will denote the set of all words of length $m$ in $\asoX$,
where the length of $u = x_{i_1}\cdots x_{i_m} \in X^m$
will be denoted by $|u|= m$.
Then
\[\asoX = \bigsqcup_{m\in\N_0}  X^{m},\;
X^0 = \{1\},\;  \mbox{and} \ \   X^{k}X^{m} \subseteq X^{k+m},\]
so the free monoid $\asoX$ is naturally \emph{graded by length}.

Similarly, the free associative algebra $\textbf{k}\asoX$ is also graded by
length:
\[\textbf{k}\asoX
 = \bigoplus_{m\in\N_0} \textbf{k}\asoX_m,\quad \mbox{ where}\ \
 \textbf{k}\asoX_m=\textbf{k} X^{m}. \]

A polynomial $f\in  \textbf{k}\asoX$ is \emph{homogeneous of degree $m$} if $f \in
\textbf{k} X^{m}$.
We denote by
\[\cT =\cT(X) :=\left\lbrace x_1^{\alpha_1}\cdots x_n^{\alpha_n}\in\asoX \ \vert
\ \alpha_i\in\N_0, i\in\{1,\dots,n\}\right\rbrace\]
the set of ordered monomials (terms) in $\asoX$.

\subsection{Gr\"obner bases for ideals in the free associative algebra}
\label{sec:grobner}
We shall briefly remind some basics on noncommutative Gr\"obner bases.
In this subsection $X=\{x_1,\dotsc, x_n\}$.
Suppose $f \in \textbf{ k}\asoX$ is a nonzero polynomial. Its leading
monomial with respect to the degree-lexicographic order $<$ on $\asoX$ will be denoted by
$\LM(f)$.
One has $\LM(f)= u$ if
$f = cu + \sum_{1 \leq i\leq m} c_i u_i$, where
$ c,c_i \in \textbf{k}$, $c \neq 0 $ and $u > u_i$ in $\asoX$, for every
$i\in\{1,\dots,m\}$.
Given a set $F \subseteq \textbf{k} \asoX$ of
non-commutative polynomials, $\LM(F)$ denotes the set
 \[\LM(F) = \{\LM(f) \mid f \in F\}.\]
A monomial $u\in \asoX$ is \emph{normal modulo $F$} if it does not contain any of
the monomials $\LM(f), f \in F$ as a subword.
 The set of all normal monomials modulo $F$ is denoted by $N(F)$.

Let  $I$ be a two sided graded ideal in $\textbf{ k} \asoX$ and let $I_m = I\cap
\textbf{k}X^m$.
We shall
assume that
$I$ \emph{is generated by homogeneous polynomials of degree $\geq 2$}
and $I = \bigoplus_{m\ge 2}I_m$. Then the quotient
algebra $A = \textbf{k} \asoX/ I$ is finitely generated and inherits its
grading $A=\bigoplus_{m\in\N_0}A_m$ from $ \textbf{k} \asoX$. We shall work with
the so-called \emph{normal} $\textbf{k}$-\emph{basis of} $A$.
We say that a monomial $u \in \asoX$ is  \emph{normal modulo $I$} if it is normal
modulo $\LM(I)$. We set
\[N(I):=N(\LM(I)).\]
In particular, the free
monoid $\asoX$ splits as a disjoint union
\begin{equation}
\label{eq:X1eq2a}
\asoX=  N(I)\sqcup \LM(I).
\end{equation}
The free associative algebra $\textbf{k} \asoX$ splits as a direct sum of
$\textbf{k}$-vector
  subspaces
  \[\textbf{k} \asoX \simeq  \Span_{\textbf{k}} N(I)\oplus I,\]
and there is an isomorphism of vector spaces
$A \simeq \Span_{\textbf{k}} N(I).$

It follows that every $f \in \textbf{k}\asoX$ can be written uniquely as $f =
h+f_0,$ where $h \in I$ and $f_0\in {\textbf{k}} N(I)$.
The element $f_0$ is called \emph{the normal form of $f$ (modulo $I$)} and denoted
by
$\Nor(f)$.
We define
\[N(I)_{m}=\{u\in N(I)\mid u\mbox{ has length } m\}.\]
In particular, $N(I)_{1} = X$, and by definition $N(I)_{0} = 1.$
 Then
$A_m \simeq \Span_{\textbf{k}} N(I)_{m}$ for every $m\in\N_0$.

A subset
$G \subseteq I$
of monic polynomials is a \emph{Gr\"{o}bner
basis} of $I$ (with respect to the ordering $<$) if
\begin{enumerate}
\item $G$ generates $I$ as a
two-sided ideal, and
\item for every $f \in I$ there exists $g \in G$ such that $\LM(g)$ is a
    subword of $\LM(f)$, that is
$\LM(f) = a\LM(g)b$,  for some $a, b \in \asoX$.
\end{enumerate}
A  Gr\"{o}bner basis $G$ of
$I$ is \emph{reduced} if (i)  the set $G\setminus\{f\}$ is not a Gr\"{o}bner
basis of $I$, whenever $f \in G$; (ii) each  $f \in G$  is a linear combination
of normal monomials modulo $G\setminus\{f\}$.

It is well-known that every ideal $I$ of $\textbf{k} \asoX$ has a unique reduced
Gr\"{o}bner basis $G_0= G_0(I)$ with respect to $<$. However, $G_0$ may be
infinite.
 For more details, we refer the reader to \cite{Latyshev, Mo88, Mo94}.

Bergman's Diamond lemma  \cite[Theorem 1.2]{Bergman} implies the following.
\begin{rmk}
\label{rmk:diamondlemma}
Let $G  \subset \textbf{k}\asoX$  be a set  of noncommutative polynomials. Let $I =
(G)$ and let $A = \textbf{k}\asoX/I.$ Then the following
conditions are equivalent.
\begin{enumerate}
\item
The set
$G$  is a Gr\"{o}bner basis of $I$.

\item Every element $f\in \textbf{k}\asX$ has a unique normal form modulo $G$,
    denoted by $\Nor(f)$.
\item
There is an equality $N(G) = N(I)$, so there is an isomorphism of vector spaces
\[\textbf{k}\asoX \simeq I \oplus \textbf{k}N(G).\]
\item The image of $N(G)$ in $A$ is a $\textbf{k}$-basis of $A$.
In this case
$A$ can be identified with the $\textbf{k}$-vector space $\textbf{k}N(G)$, made
a $\textbf{k}$-algebra by the multiplication
$a\bullet b: = \Nor(ab).$
\end{enumerate}
\end{rmk}

 In this paper, we focus on a class of quadratic finitely presented algebras $A$
 associated with finite set-theoretic solutions $(X,r)$  of the Yang-Baxter equation.
 Following Yuri Manin, \cite{Ma88}, we call them Yang-Baxter algebras.

\subsection{Quadratic algebras}
\label{sec:Quadraticalgebras}
A quadratic  algebra is an associative graded algebra
 $A=\bigoplus_{i\ge 0}A_i$ over a ground field
 $\textbf{k}$  determined by a vector space of generators $V = A_1$ and a
subspace of homogeneous quadratic relations $R= R(A) \subset V
\otimes V.$ We assume that $A$ is finitely generated, so $\dim A_1 <
\infty$. Thus $ A=T(V)/( R)$ inherits its grading from the tensor
algebra $T(V)$.

As usual, we take a
combinatorial approach to study $A$. The properties of $A$ will be
read off a finite presentation $A= \textbf{k} \langle X\rangle /(\Re)$,
where by convention $X$ is a fixed finite set of generators of
degree $1$, ($X$ is a basis of $A_1$), $|X|=n,$
and $(\Re)$ is the two-sided
ideal of relations, generated by a finite linearly independent set $\Re$ of
homogeneous polynomials of degree two.
 \begin{dfn}
\label{def:PBW}
A quadratic algebra $A$ is
\emph{a  Poincar\`{e}–Birkhoff–Witt type algebra} or shortly
\emph{a PBW algebra} if there exists an enumeration $X= \{x_1,
\cdots, x_n\}$ of $X,$ such that the quadratic relations $\Re$ form a
(noncommutative) Gr\"{o}bner basis with respect to the
degree-lexicographic ordering $<$ on $\asoX$.
In this case the set of normal monomials
(mod $\Re$) forms a $\textbf{k}$-basis of $A$ called a \emph{PBW
basis}
 and $x_1,\cdots, x_n$ (taken exactly with this enumeration) are called \emph{
 PBW-generators of $A$}.
\end{dfn}
 The notion of a \emph{PBW} algebra was introduced by Priddy, \cite{priddy}.
 His \emph{PBW basis}  is a generalization of the classical
 Poincar\'{e}-Birkhoff-Witt basis for the universal enveloping of a  finite
 dimensional Lie algebra.
 PBW algebras form an important class of Koszul algebras.
  The interested reader can find information on quadratic algebras and, in
  particular, on Koszul algebras and PBW algebras in
  \cite{PoPo}.
A special class of PBW algebras important for this paper, are the
\emph{binomial skew polynomial rings}.


  \begin{dfn}
\label{binomialringdef}
\cite{GI96, GI94} A {\em binomial skew polynomial ring} is a quadratic algebra
 $A=\textbf{k} \langle x_1, \cdots , x_n\rangle/(\Re_0)$ with
precisely $\binom{n}{2}$ defining relations
\begin{equation}
\label{eq:skewpol}
\Re_0=\{f_{ji} = x_{j}x_{i} -
c_{ij}x_{i^\prime}x_{j^\prime} \mid 1\leq i<j\leq n\}\quad\text{such that}
\end{equation}

\begin{enumerate}
\item[(a)]
 $c_{ij} \in \textbf{k}^{\times}$;
\item[(b)] For every pair $i, j, \; 1\leq
i<j\leq n$, the relation $x_{j}x_{i} - c_{ij}x_{i'}x_{j'}\in \Re_0,$
satisfies $j
> i^{\prime}$, $i^{\prime} < j^{\prime}$;
\item[(c)] Every ordered monomial $x_ix_j,$
with $1 \leq i < j \leq n$ occurs (as a second term) in some
relation in $\Re_0$;
\item[(d)] The set $\Re_0$ is the
{\it reduced Gr\"obner basis } of the two-sided ideal $(\Re_0)$,
with respect to the degree-lexicographic order $<$ on $\asoX$,  or
equivalently
\item[(d$^{\prime}$)] The set of terms $\cT=\left\lbrace x_1^{\alpha_1}\cdots x_n^{\alpha_n}\in\asoX \ \vert
\ \alpha_i\in\N_0, i\in\{0,\dots,n\}\right\rbrace$ projects to a $\textbf{k}$-basis of $A$.
\end{enumerate}
\end{dfn}
The equivalence of (d) and (d$^{\prime}$) follows from Remark \ref{rmk:diamondlemma}.

Clearly, $A$ is a PBW algebra with a set of PBW generators $x_1, \cdots , x_n$.
Each binomial skew polynomial ring defines via its relations a square-free
solution of the Yang-Baxter equation, see \cite{GIVB}. Conversely, if $(X,r)$ is a square-free solution, then there exists an enumeration
$X=\{x_1, x_2, \cdots, x_n\}$ such that the Yang-Baxter algebra $\cA(\textbf{k}, X,r)$ is a binomial skew-polynomial ring, see \cite{GI12},

\begin{ex}
\label{example1}
Let $A = \textbf{k}\langle x_1,x_2,x_3,x_4 \rangle /(\Re_0)$, where
\[
 \begin{array}{l}
\Re_0
 = \{x_4x_2 -x_1x_3,\; x_4x_1 -x_2x_3,\;x_3x_2 -x_1x_4,\; x_3x_1 -x_2x_4, \;
 x_4x_3 -x_3x_4,\;  x_2x_1 -x_1x_2
\}.
\end{array}
\]
The algebra $A$ is a binomial skew-polynomial ring. It is a PBW algebra with PBW
generators $X= \{x_1, x_2, x_3, x_4\}$.
The relations of $A$ define in a natural way a solution of the Yang-Baxter equation.
 \end{ex}

\subsection{Set-theoretic solutions of the Yang-Baxter equation and their Yang-Baxter algebras}

\begin{dfn}
\label{def:quadraticsets_All}
Let $X $ be a nonempty set, and let $r: X\times X\longrightarrow X\times X$ be a bijective map.
Then the pair $(X, r)$ is called \emph{a quadratic set}.
Recall that the map $r$ is \emph{a set-theoretic solution of the Yang–Baxter equation (YBE)}
if  the braid
relation
\[r^{12}r^{23}r^{12} = r^{23}r^{12}r^{23}\]
holds in $X\times X\times X,$  where  $r^{12} = r\times\id_X$, and
$r^{23}=\id_X\times r$. In this case we  refer to  $(X,r)$ also as
\emph{a braided set}.

The image of $(x,y)$ under $r$ is
presented as
\[
r(x,y)=({}^xy,x^{y}).
\]
This formula defines a ``left action'' $\Lcal: X\times X
\longrightarrow X,$ and a ``right action'' $\Rcal: X\times X
\longrightarrow X,$ on $X$ as: $\Lcal_x(y)={}^xy$, $\Rcal_y(x)=
x^{y}$, for all $x, y \in X$.
\begin{enumerate}
\item[(i)] $(X, r)$ is \emph{non-degenerate}, if
the maps $\Lcal_x$ and $\Rcal_x$ are bijective for each $x\in X$.
\item[(ii)]
$(X, r)$ is \emph{involutive} if $r^2 = id_{X\times X}$.
\item[(iii)]
$(X,r)$ is \emph{square-free} if $r(x,x)=(x,x)$ for all $x\in X.$
\item[(iv)]  A braided set $(X,r)$ with $r$ involutive is
called \emph{a symmetric set}.
\item[(v)] A nondegenerate symmetric set will be called simply \emph{a solution}.

\end{enumerate}
\end{dfn}

\begin{convention}
\label{conv:convention1} In this paper we shall always assume that $(X, r)$ is
nondegenerate.   "\emph{A solution}"
means "\emph{a non-degenerate symmetric set}" $(X,r)$, where $X$  is
a set of arbitrary cardinality.
\end{convention}
As a notational tool, we  shall  often identify the sets $X^{\times
m}$ of ordered $m$-tuples, $m \geq 2,$  and $X^m,$ the set of all
monomials of length $m$ in the free monoid $\asX$. Sometimes for simplicity we
shall  write $r(xy)$ instead of $r(x,y)$.

\begin{dfn} \cite{GI04, GIM08}
\label{def:algobjects} To each quadratic set $(X,r)$ we associate
canonically algebraic objects generated by $X$ and with quadratic
relations $\Re =\Re(r)$ naturally determined as
\[
xy=y^{\prime} x^{\prime}\in \Re(r)\; \text{iff}\;
 r(x,y) = (y^{\prime}, x^{\prime})\; \text{and} \;
 (x,y) \neq (y^{\prime}, x^{\prime})\;\text{hold in}\;X \times X.
\]
In this paper we use mainly the monoid and the quadratic algebra associated with $(X,r)$.

 The monoid
$S =S(X, r) = \langle X ; \; \Re(r) \rangle$
 with a set of generators $X$ and a set of defining relations $ \Re(r)$ is
called \emph{the monoid associated with $(X, r)$}.

For an arbitrary fixed field $\textbf{k}$,
\emph{the} \textbf{k}-\emph{algebra associated with} $(X ,r)$ is
defined as
\[\begin{array}{c}
\cA = \cA(\textbf{k},X,r) = \textbf{k}\langle X  \rangle /(\Re_0),
\text{where}\;
\Re_0 = \{xy-y^{\prime}x^{\prime}\mid r(xy)=y^{\prime}x^{\prime}\; \text{and}\; r(xy)\neq xy \; \text{holds in}\;X^2 \}.
\end{array}
\]
Clearly, $\cA$ is a quadratic algebra generated by $X$ and
 with defining relations  $\Re_0 $ (or equivalently, $\Re(r)$ ), which is
isomorphic to the monoid algebra $\textbf{k}S(X, r)$.
  When $(X,r)$ is a solution, following Yuri Manin, \cite{Ma88}, we call
the algebra $\cA$   \emph{an Yang-Baxter algebra}, or shortly \emph{YB algebra}.
\end{dfn}

Suppose $(X,r)$ is a finite quadratic set. Then
$A=  A(\textbf{k},X,r)$ is \emph{a connected
graded} $\textbf{k}$-algebra (naturally graded by length),
 $A=\bigoplus_{i\ge0}A_i$, where
$A_0=\textbf{k},$
and each graded component $A_i$ is finite dimensional.


 By \cite[Proposition 2.3.]{GI11} If
$(X,r)$ is a nondegenerate  involutive quadratic set of finite order $|X| =n$
then the set $\Re(r)$
consists of precisely $\binom{n}{2}$ quadratic relations.
In this  case the associated algebra $\cA= \cA(\textbf{k}, X, r)$ satisfies
\[\dim \cA_2 = \binom{n+1}{2}.\]

\begin{defnotation}\cite{GI21}
\label{def:fixedpts}
Suppose $(X,r)$ is an involutive quadratic set. Then the cyclic group $\langle r \rangle =\{1, r\}$  acts on the set $X^2$ and  splits it into disjoint $r$-orbits $\{ xy, r(xy)\}$, where $xy\in X^2$.

An $r$-orbit $\{ xy, r(xy)\}$
is \emph{non-trivial} if $xy\neq r(xy)$.

The element $xy\in X^2$ is \emph{an
$r$-fixed point} if $r(xy) =xy$. \emph{The set of $r$-fixed points} in $X^2$
will be denoted by $\Fcal (X,r)$:
\begin{equation}
\label{eq:fixedpts}
\Fcal (X,r) = \{xy\in X^2\mid r(xy) = xy\}
\end{equation}
\end{defnotation}

The following corollary is a consequence of \cite[Lemma 3.7]{GI21}.
\begin{cor}
\label{cor:orbits_and_fixedpoints}
Let  $(X,r)$ be a nondegenerate symmetric set of finite order $|X|= n$, $\cA= \cA(\textbf{k}, X, r)$.
\begin{enumerate}
   \item
There are exactly $n$ fixed points
$\Fcal = \Fcal (X,r) =  \{x_1y_1, \cdots , x_ny_n\}\subset X^2$ ,
so
$|\Fcal(X,r)|= |X|= n.$
In the special case, when $(X,r)$ is a square-free solution, one has
$\Fcal(X,r) = \Delta_2=\{xx\mid x\in X\}$, the diagonal of $X^2$.
\item The number of non-trivial $r$-orbits is exactly $\binom{n}{2}$. Each such an orbit has two elements:
$xy$ and $r(xy)$, where $xy, r(xy) \in X^2$.
\item
The set $X^2$  splits into $\binom{n+1}{2}$ $r$-orbits.
For $xy,zt\in X^2$ there is an equality $xy=zt$ in $\cA$ \emph{iff} $zt =r(xy)$.
\end{enumerate}
\end{cor}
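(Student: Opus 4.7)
The plan is to derive all three items as a counting/orbit-structure bookkeeping exercise, using the two ingredients already stated just above the corollary: Proposition 2.3 of \cite{GI11}, which gives $|\Re(r)|=\binom{n}{2}$, and the complementary dimension count $\dim\cA_2=\binom{n+1}{2}$.

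First I would analyze the $\langle r\rangle$-action on $X^2$. Since $r^2=\id_{X^2}$, every orbit has size one or two; a one-element orbit is exactly a fixed point $xy\in\Fcal(X,r)$, and every two-element orbit has the form $\{xy,r(xy)\}$ with $xy\neq r(xy)$. By construction of $\Re_0$ in Definition \ref{def:algobjects}, the non-trivial orbits are in bijection with the defining relations: each orbit $\{xy,r(xy)\}$ contributes the single binomial relation $xy-r(xy)\in\Re_0$. Hence the number of non-trivial orbits equals $|\Re_0|=\binom{n}{2}$, which is (ii).

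For (i), counting elements of $X^2$ by orbit size gives
\[
n^2=|X^2|=|\Fcal(X,r)|+2\cdot\binom{n}{2},
\]
so $|\Fcal(X,r)|=n^2-n(n-1)=n$. In the square-free case, the inclusion $\Delta_2\subseteq\Fcal(X,r)$ is immediate from $r(x,x)=(x,x)$, and since $|\Delta_2|=n=|\Fcal(X,r)|$ the two sets coincide. Enumerating the fixed points as $x_1y_1,\dots,x_ny_n$ gives the stated form.

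For (iii), the total number of orbits is $|\Fcal(X,r)|+\binom{n}{2}=n+\binom{n}{2}=\binom{n+1}{2}$. For the equality-in-$\cA$ statement, note that $\cA_2$ is the quotient of $\textbf{k}X^2$ by the span of $\{xy-r(xy):xy\in X^2\}$, so the classes of $xy$ and $zt$ in $\cA_2$ coincide precisely when $xy$ and $zt$ lie in the same $r$-orbit, i.e.\ $zt\in\{xy,r(xy)\}$. Since we already know $\dim\cA_2=\binom{n+1}{2}$ equals the number of orbits, distinct orbits project to linearly independent, hence distinct, elements of $\cA_2$, and the ``iff'' is confirmed.

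There is no real obstacle here: the entire argument is a double count organised around the size-1 vs.\ size-2 orbit dichotomy forced by $r^2=\id$, and the only non-trivial input is the two previously cited facts about $|\Re(r)|$ and $\dim\cA_2$, which together pin down the number of non-trivial orbits and prevent any ``hidden'' coincidences among orbit classes in $\cA_2$.
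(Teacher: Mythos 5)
Your proof is correct. Note that the paper itself does not prove this corollary at all: it simply states that it is ``a consequence of [Lemma 3.7]'' of \cite{GI21}, so there is no internal argument to compare against. What you supply is a self-contained derivation from the orbit dichotomy forced by $r^2=\id$ (orbits of size $1$ or $2$), the cited count $|\Re(r)|=\binom{n}{2}$, and $\dim\cA_2=\binom{n+1}{2}$; all three items then follow by the double count $n^2=|\Fcal|+2\binom{n}{2}$ and the observation that the $\binom{n+1}{2}$ orbit classes span $\cA_2$ and hence, by the dimension count, are linearly independent. This is exactly the kind of argument the external lemma encapsulates, and your version has the advantage of making the logical dependencies explicit: everything rests on the two counting facts the paper has already quoted. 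Two small remarks. First, for the last ``iff'' in (iii) you could avoid invoking $\dim\cA_2$ altogether: since distinct orbits involve disjoint sets of monomials, the span of the relations meets the span of any single orbit only in the line spanned by $xy-r(xy)$, so monomials from different orbits can never be identified in $\cA_2$; this makes that step independent of the Hilbert-series input. Second, the ``iff'' as literally stated fails in the degenerate case $zt=xy$ with $xy$ not a fixed point; your reading ($zt\in\{xy,r(xy)\}$, i.e.\ same orbit) is the intended one and is what you actually prove.
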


\section{The quadratic algebra $\cA(\textbf{k}, X, r)$  of a finite
nondegenerate symmetric set $(X,r)$}
\label{sec:YBalgebras}

The following results on square-free solutions are extracted from works of the author.
\begin{facts}
\label{fact1}
\begin{enumerate}
\item \cite[Theorem 3.7]{GI12}
Suppose $(X,r)$ is a square-free nondegenerate and involutive quadratic set of order $n$.
Let and  $\cA = \cA(\textbf{k}, X, r)$  be the associated quadratic algebra.
The following conditions are equivalent.
\begin{enumerate}
\item
 $(X,r)$ is a solution of YBE.
 \item $\cA$ is an Artin-Schelter regular PBW algebra.
 \item
There exists an enumeration
$X=\{x_1, x_2, \cdots, x_n\}$ such that $\cA$ is a binomial skew-polynomial algebra, see Definition \ref{binomialringdef}.
\end{enumerate}
\item \cite[Theorem 3.8]{GI22}
Suppose $(X,r)$ is a nondegenerate symmetric set of order $n$, and $\cA=\cA(K,  X, r)$ is its Yang-Baxter algebra.
Then $\cA$ is a PBW algebra with a set of PBW generators $X = \{x_1, x_2, \cdots, x_n\}$  (enumerated properly)
  if and only if $(X,r)$ is a square-free solution.
In this case $\cA$ is a binomial skew-polynomial algebra (in the sense of \cite{GI96}).
\end{enumerate}
\end{facts}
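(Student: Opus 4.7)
Since the two items of Facts \ref{fact1} are collected from earlier papers of the author, the plan is to sketch how one would prove them from scratch rather than quote the references. For part (1) I would establish a cycle of implications $(a) \Rightarrow (c) \Rightarrow (b) \Rightarrow (a)$. A key input is Corollary \ref{cor:orbits_and_fixedpoints}: in the square-free case the $r$-fixed points of $X^2$ are exactly the diagonal $\Delta_2 = \{xx \mid x \in X\}$, leaving precisely $\binom{n}{2}$ non-trivial $r$-orbits and therefore exactly $\binom{n}{2}$ defining relations for $\cA$.

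For $(a) \Rightarrow (c)$ I would construct an enumeration $x_1 < \cdots < x_n$ of $X$ compatible with the retract/multipermutation structure of the square-free symmetric set, iteratively lifting a linear order from lower retracts. With this ordering every non-trivial $r$-orbit can be written as $\{x_j x_i,\, x_{i'}x_{j'}\}$ with $j>i$, and the compatibility of the actions $\Lcal_x, \Rcal_x$ with the enumeration forces the binomial skew polynomial conditions $j > i'$ and $i' < j'$. Gröbnerness is then proven via the Diamond Lemma: the only cubic ambiguities are words $x_k x_j x_i$ with $k > j > i$, and each resolves thanks to the braid equation together with square-freeness. The implication $(c) \Rightarrow (b)$ is standard for binomial skew polynomial rings, whose basic homological properties (Koszulness, Noetherianity, domain, Gorensteinness, Hilbert series $1/(1-t)^n$) were established in earlier work and yield Artin-Schelter regularity. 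For $(b) \Rightarrow (a)$, having a PBW algebra with $n$ generators and the commutative polynomial ring's Hilbert series forces exactly $\binom{n}{2}$ relations, each of shape $xy - y'x'$; this defines a bijection on the non-fixed part of $X^2$, and the cubic overlaps resolve precisely to the YBE.

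For part (2), the direction $(\Leftarrow)$ is immediate from part (1). For $(\Rightarrow)$, suppose $\cA$ is PBW with generators $X$ and let $\Re_0$ be the reduced quadratic Gröbner basis. Each relation is a binomial $xy - y'x'$ coming from the bijection $r$, and the Gröbner basis property forces a consistent orientation so that the leading monomial $x_j x_i$ satisfies $j > i'$. If $r(x,x) \neq (x,x)$ for some $x \in X$, then the orbit of $(x,x)$ would produce a relation containing $x^2$ either as leading or as trailing monomial, which via an overlap argument on $x^3$ is inconsistent with nondegeneracy and the reducedness of the Gröbner basis; hence $(X,r)$ is square-free. The ``moreover''-statement is then a direct consequence of part (1).

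The main obstacle is $(a) \Rightarrow (c)$: producing an enumeration of $X$ for which the defining relations exhibit the binomial skew polynomial orientation. The existence of such an enumeration is not formal and ultimately rests on the multipermutation structure of square-free involutive solutions, namely the finiteness of their retract height and the solvability of their structure groups, which is the deepest input used.
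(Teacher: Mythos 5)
The paper does not actually prove Facts~\ref{fact1}: it is a summary of results imported from \cite{GI12} and \cite{GI22}, cited without proof, so there is no internal argument to compare yours against. Judged on its own terms, your skeleton --- the cycle $(a)\Rightarrow(c)\Rightarrow(b)\Rightarrow(a)$ for part (1), with part (2) reduced largely to part (1) plus a converse counting/overlap argument --- matches how the cited theorems are organized, and the implications $(c)\Rightarrow(b)$ and $(b)\Rightarrow(a)$ are sketched along standard and correct lines.

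There is, however, a genuine gap in your treatment of the key implication $(a)\Rightarrow(c)$. You construct the required enumeration ``by iteratively lifting a linear order from lower retracts'' and state explicitly that the deepest input is the finiteness of the retract height of finite square-free solutions. That finiteness is precisely Gateva-Ivanova's retractability (multipermutation) conjecture for square-free solutions, and it is \emph{false} in general: there exist finite irretractable square-free nondegenerate involutive solutions (Vendramin), for which your tower of retracts collapses at the first step even though conclusion $(c)$ still holds for them. The argument that actually works replaces retractability by Rump's decomposability theorem (every finite square-free solution with $|X|\geq 2$ splits into two nonempty invariant subsolutions): one orders the pieces by induction on $|X|$ and concatenates, which automatically yields conditions (a)--(c) of Definition~\ref{binomialringdef} for the mixed relations; the Gr\"{o}bner condition (d) is then obtained not by resolving cubic ambiguities by hand but by comparing the Hilbert series $1/(1-t)^n$ of the $I$-type monoid $S(X,r)$ from \cite{GIVB} with the count $\binom{n+d-1}{d}$ of ordered monomials --- exactly the dimension-count used in this paper in the proof of Proposition~\ref{pro:SegreSquarefree}. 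Solvability of the structure group plays no role. Finally, in part (2) your degree-$3$ overlap argument for ``PBW $\Rightarrow$ square-free'' is asserted rather than carried out; it should likewise be replaced by a count of normal words against $\dim\cA_2$ and $\dim\cA_3$, since the mere presence of a relation involving $x^2$ is not by itself a contradiction.
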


The following conventions will be kept in the sequel.
\begin{convention}
\label{rmk:conventionpreliminary1}  Let $(X,r)$ be a finite nondegenerate
symmetric set of order $n$, and let $\cA = \cA(\textbf{k},X,r)$ be the associated
Yang-Baxter algebra.
(a) If $(X,r)$ is square-free we fix an
enumeration such that $X= \{x_1, \cdots, x_n\}$ is a set of PBW generators
of $\cA$. In this case $\cA$ is a binomial skew polynomial ring, see
 Definition \ref{binomialringdef}.
 (b)  If $(X,r)$ is not square-free we fix an arbitrary enumeration
$X=\{x_1, \cdots, x_n\}$ on $X$.
In each case we extend the fixed enumeration on $X$ to the
 degree-lexicographic ordering $<$ on
$\langle X \rangle$.
By convention the Yang-Baxter algebra $\cA =\cA_X= \cA(\textbf{k},X, r)$
is presented as
\begin{equation}
\label{eq:Algebra}
\begin{array}{c}
\cA =\c A(\textbf{k},X,r) = \textbf{k}\langle X  \rangle /(\Re)
\simeq \textbf{k}\langle X ; \;\Re(r)
\rangle ,\;\;\text{where}\\
\Re = \Re_{\cA}
 = \{xy-y^{\prime}x^{\prime}\mid xy> y^{\prime}x^{\prime},  \; \text{and}\; r(xy)=y^{\prime}x^{\prime}
 \}.
\end{array}
\end{equation}
Consider the two-sided ideal  $I=(\Re)$ of  $\textbf{k}\langle X  \rangle$,
let $G= G(I)$ be the  unique reduced
Gr\"{o}bner basis of $I$  with respect to $<$.
It follows from the shape of the relations  $\Re$ that $\Re\subseteq G$ and $G$ is finite, or countably
 infinite, and consists of homogeneous binomials $f_j= u_j-v_j,$ with
 $\LM(f_j)= u_j > v_j$, and $|u_j|= |v_j|$.

The set of all normal monomials modulo $I$ is denoted by $\cN$. As we mentioned
in Section 2, $\cN= \cN(I) = \cN(G)$.
An element $f \in \textbf{k} \asoX $ is in normal form (modulo $I$), if $f \in
\Span_{\textbf{k}} \cN $.
The free
monoid $\asoX$ splits as a disjoint union
$\asoX=  \cN\sqcup \LM(I).$
The free associative algebra $\textbf{k} \asoX$ splits as a direct sum of
$\textbf{k}$-vector
  subspaces
  $\textbf{k} \asoX \simeq  \Span_{\textbf{k}} \cN \oplus I$,
and there is an isomorphism of vector spaces
$\cA \simeq  \Span_{\textbf{k}} \cN.$
We define
\begin{equation}
\label{eq:Nm}
\cN_{m}=\{u\in \cN \mid u\mbox{ has length } m\}.
\end{equation}
 Then
$\cA_m \simeq \Span_{\textbf{k}} \cN_{m}$ for every $m\in\N_0$.
In particular $\dim \cA_m  = |\cN_{m}|, \; \forall m \geq 0$.

Note that since the set of relations $\Re$ is a finite set of homogeneous polynomials,
the elements
of  the reduced Gr\"{o}bner basis $G = G(I)$  of degree $\leq m$ can be found
effectively, (using the standard strategy for constructing a Gr\"{o}bner
basis) and therefore the set of normal monomials $\cN_{m}$  can be found
inductively for $m=1, 2, 3, \cdots .$
Here we do not need an explicit description of the reduced Gr\"{o}bner basis
$G$  of $I$.

Let $\cN$ be the set of normal monomials
modulo the ideal $I = (\Re)$.
It follows from Bergman's Diamond lemma, \cite[Theorem 1.2]{Bergman}, that
if we consider the space $\textbf{k} \cN$
endowed with multiplication defined by
 \[f \bullet g := \Nor (fg), \quad \text{for every}\; f,g \in \textbf{k} \cN\]
then
$(\textbf{k} \cN, \bullet )$
has a well-defined structure of a graded algebra, and there is an isomorphism of
graded algebras
\[
\cA=\cA(\textbf{k}, X, r) \cong (\textbf{k} \cN, \bullet ),\;\; \text{so}\; \; \cA =\bigoplus_{m\in\N_0}  A_m \cong \bigoplus_{m\in\N_0}  \textbf{k}\cN_m.
\]
We shall often identify the algebra  $\cA$ with $(\textbf{k}\cN,
\bullet )$.

In the case when $(X,r)$ is square-free,  the set of normal monomials is exactly
$\cT$, so $\cA$ is identified with $(\textbf{k} \cT, \bullet )$ and
$S(X,r)$ is identified with $(\cT, \bullet)$.
\end{convention}


The following results are extracted from \cite{GIVB}.
\begin{facts}
\label{fact2}
Suppose $(X,r)$ is a nondegenerate symmetric set of order $n$, $X = \{x_1,
\cdots , x_n\}$, let $S=S(X,r)$ be the associated monoid and  and let $\cA = \cA(\textbf{k},
X, r)$  the associated Yang-Baxter algebra ($\cA$ is isomorphic to the monoid
algebra $\textbf{k}S$).
Then the following conditions hold.
\begin{enumerate}
\item
$S$ is a semigroup of $I$-type, that is there is a bijective map $v: \cU
\mapsto S$, where $\cU$ is the free $n$-generated abelian monoid
$\cU=[u_1, \cdots, u_n]$ such that $v(1) = 1,$ and such that
\[
\{v(u_1a), \cdots, v(u_na)\} = \{ x_1v(a), \cdots, x_nv(a) \}, \;\text{for
all}\; a \in \cU.
\]
\item The Hilbert series of $A$ is $H_A(t)= 1/(1-t)^n.$
\item \cite[Theorem 1.4]{GIVB}
(a) $A$ has finite global dimension and polynomial growth; (b) $A$ is Koszul;
 (c) A is left and right Noetherian; (d) $A$ satisfies the Auslander condition and is
 is Cohen-Macaulay,
(e) $A$ is finite over its center.

\item \cite[Corollary 1.5]{GIVB}
$A$ is a domain, and in particular the monoid $S$ is cancellative.
\end{enumerate}
\end{facts}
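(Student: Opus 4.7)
The plan is to derive the entire package from the $I$-type structure of the associated monoid $S = S(X,r)$, which is the central structural result. The construction of the bijection $v:\cU \to S$ is the backbone: starting from $v(1)=1$ and $v(u_i)=x_i$, one extends inductively, using at each step the bijectivity of the left action $\Lcal_x$ (guaranteed by non-degeneracy) to reorder any word of length $m$ into a canonical form $x_{i_1}v(a)$ with $a\in\cU$ of length $m-1$. Involutivity ($r^2=\id$) is what makes the result independent of the order in which reductions are performed, so that $v$ descends to a well-defined bijection modulo $\Re(r)$. The identity $\{v(u_1a),\ldots,v(u_na)\}=\{x_1v(a),\ldots,x_nv(a)\}$ is then immediate from the bijectivity of the actions together with cancellation of equal length.

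From the $I$-type property one extracts (2) and (4) almost directly. Since $v$ preserves length, $\dim \cA_m=|S_m|=|\cU_m|=\binom{n+m-1}{m}$, giving $H_{\cA}(t)=(1-t)^{-n}$ by \eqref{eq:hilbert}. Cancellativity of $S$ follows from cancellativity of $\cU$ via $v$, and from there one deduces that $\cA=\textbf{k}S$ is a domain, either by constructing a group of fractions (since an $I$-type monoid embeds in a group $G(X,r)$ of the same rank) and using that the group algebra of a torsion-free nilpotent-by-finite group is a domain, or by using the filtration by length together with the fact that the associated graded of a suitable deformation is a commutative polynomial ring.

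The homological statements in (3) form the hard core. The strategy is to compare $\cA$ with $\textbf{k}[x_1,\ldots,x_n]$ via the $I$-type bijection: they share the same Hilbert series and the same Gelfand--Kirillov dimension $n$, so polynomial growth and finite global dimension are inherited. Koszulness requires exhibiting a free resolution of $\textbf{k}$ as an $\cA$-module whose differentials are encoded by the quadratic relations; one builds this by a distributive-lattice argument on the quadratic data $(V, R)$, using that the $r$-orbits on $X^2$ are exactly the ``lowered'' and ``raised'' terms in $\Re$. Noetherianity is proved by combining the Gr\"obner basis structure in Convention~\ref{rmk:conventionpreliminary1} with the finiteness of a suitable filtration whose associated graded is commutative Noetherian. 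The Auslander condition and Cohen--Macaulayness are then obtained by verifying that $\cA$ satisfies the hypotheses of a standard lifting theorem (e.g.\ via a central regular sequence obtained in the next step), and finiteness over the center is established by producing explicit central elements: the orbit sums $\sum_{x\in O} x^N$, for $N$ a common multiple of the orders of all $\Lcal_x$ and $\Rcal_x$, commute with every generator by non-degeneracy plus involutivity.

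The main obstacle is the Koszul and Auslander--Cohen--Macaulay package, since these require more than combinatorial book-keeping: one must produce a concrete Koszul complex and verify that the associated graded with respect to a carefully chosen filtration is the commutative polynomial ring, so that the homological invariants can be transported. Everything else--$I$-type, Hilbert series, domain, Noetherian, finite over center--follows in a relatively linear fashion once the bijection $v$ is in place.
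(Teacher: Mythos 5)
First, note that the paper does not prove this statement at all: it is a ``Facts'' environment explicitly extracted from \cite{GIVB}, with the only in-paper justification being the remark that item (1) is Theorem 1.3 of \cite{GIVB} and that item (2) is straightforward from (1). So your attempt to reconstruct the proofs from scratch is necessarily a different route, and for items (1), (2) and (4) your outline does track the standard arguments: the inductive construction of the length-preserving bijection $v$ using non-degeneracy, with involutivity giving well-definedness; the Hilbert series count $\dim\cA_m=|\cU_m|=\binom{n+m-1}{m}$; and the domain property via embedding $S$ into its (torsion-free, abelian-by-finite) group of $I$-type and invoking the zero-divisor theorem for such group algebras.

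However, item (3) is where the real content of \cite{GIVB} lives, and your treatment of it has genuine gaps. Koszulness is not a formal consequence of having the polynomial Hilbert series; your appeal to ``a distributive-lattice argument on the quadratic data'' names the Backelin criterion but does not verify distributivity of the lattices generated by $R\otimes V^{\otimes(m-2)},\dots,V^{\otimes(m-2)}\otimes R$, which is precisely the hard step (and $\cA$ is \emph{not} PBW unless $(X,r)$ is square-free, by Facts~\ref{fact1}(2), so you cannot fall back on Priddy's theorem). Your Noetherianity argument assumes a filtration whose associated graded is \emph{commutative} Noetherian; no such filtration is exhibited, and the Gr\"obner-basis data of Convention~\ref{rmk:conventionpreliminary1} only gives a possibly infinite reduced basis in the non-square-free case. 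Finally, the claim that the orbit sums $\sum_{x\in O}x^{N}$ are central ``by non-degeneracy plus involutivity'' is asserted, not proved; the known route to finiteness over the center goes through the structure of the group of $I$-type as a Bieberbach group, and centrality of such elements is a nontrivial computation even in the square-free case. As it stands, the proposal is a plausible roadmap for (1), (2), (4) but does not constitute a proof of (3); the paper's own (legitimate) resolution is simply to cite \cite[Theorem 1.4 and Corollary 1.5]{GIVB}.
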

Note that  (1) is a consequence of Theorem 1.3 in \cite{GIVB}, see more details in \cite{GI22}.
Part (2) is straightforward from (1).

\begin{cor}
\label{cor:dimAd}
In notation and conventions as above. Let $(X,r)$ be a nondegenerate symmetric
set of order $n$. Then for every integer $d \geq 1$ there are equalities
\begin{equation}
\label{eq:dimAd}
\dim \cA_d=\binom{n+d-1}{d} = |\cN_d|.
\end{equation}
\end{cor}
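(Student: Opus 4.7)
The proof plan is essentially an assembly of two facts that have already been laid out in the excerpt, so it will be short and direct. The target identity has two parts, $\dim \cA_d = \binom{n+d-1}{d}$ and $\dim \cA_d = |\cN_d|$, and each has its own provenance.

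For the first equality, I would invoke Facts \ref{fact2}(2), which records that the Hilbert series of the Yang-Baxter algebra is $H_{\cA}(t) = 1/(1-t)^n$. Combined with the classical expansion
\[
\frac{1}{(1-t)^n} = \sum_{d \geq 0} \binom{n+d-1}{d} t^d,
\]
already stated in \eqref{eq:hilbert} for the commutative polynomial ring $\textbf{k}[X]$, reading off coefficients in degree $d$ immediately gives $\dim \cA_d = h_{\cA}(d) = \binom{n+d-1}{d}$.

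For the second equality, I would appeal to the setup of Convention \ref{rmk:conventionpreliminary1}: the relations $\Re$ are homogeneous, so the ideal $I = (\Re)$ is graded, the reduced Gr\"obner basis $G$ consists of homogeneous binomials, and Bergman's Diamond Lemma (Remark \ref{rmk:diamondlemma}) provides the vector space decomposition $\textbf{k}\asoX \simeq I \oplus \Span_{\textbf{k}} \cN$ together with the graded isomorphism $\cA \cong (\textbf{k}\cN, \bullet)$. Restricting to degree $d$ yields $\cA_d \simeq \Span_{\textbf{k}} \cN_d$, hence $\dim \cA_d = |\cN_d|$.

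There is no real obstacle here: the corollary is stated precisely so that the reader may later invoke the equality $|\cN_d| = \binom{n+d-1}{d}$ in a combinatorial form, without having to redo the Hilbert-series computation each time. The only thing to be slightly careful about is that the two identifications $\cA_d \simeq \Span_{\textbf{k}} \cN_d$ and $H_{\cA}(t) = 1/(1-t)^n$ are compatible with the same length grading used throughout; but this is guaranteed by Convention \ref{rmk:conventionpreliminary1}, which fixes the natural grading by length on both $\textbf{k}\asoX$ and its quotient $\cA$.
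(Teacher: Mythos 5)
Your proposal is correct and matches the paper's (implicit) argument exactly: the paper offers no written proof, treating the corollary as an immediate consequence of Facts \ref{fact2}(2) (Hilbert series $1/(1-t)^n$, expanded as in \eqref{eq:hilbert}) together with the identification $\cA_d \simeq \Span_{\textbf{k}} \cN_d$ from Convention \ref{rmk:conventionpreliminary1}. Nothing further is needed.
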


Important properties of the square-free solutions are given in \cite[Theorem 1.2]{GI12}.
\section{Segre products of Yang-Baxter algebras}
\label{Sec:SegreProduct}
In this section we investigate the Segre product of Yang-Baxter algebras. The main result of the section is Theorem
\ref{thm:rel_segre}.
\subsection{Segre products of quadratic algebras}
In \cite{Froberg} Fr\"{o}berg and Backelin  made a systematic account on Koszul algebras and showed  that their properties are preserved under various constructions such as tensor products, Segre products, Veronese subalgebras.
Our main reference on Segre products of quadratic algebras and their properties is  \cite[Section 3.2]{PoPo}. An interested reader may find results on Segre product of specific Artin-Schelter regular algebras in \cite{kristel}, and on twisted Segre product of noetherian Koszul Artin-Schelter regular algebras in \cite{twistedSegre}.

We first recall the notion of Segre product of graded algebras following
\cite[Ch 3 Sect 2, Def. 1]{PoPo}.
\begin{dfn}
\label{dfn:segre}
    Let \[
    A=  \textbf{k} \oplus A_1\oplus A_2   \oplus \cdots \; \mbox{ and } \; B=   \textbf{k} \oplus B_1\oplus B_2   \oplus \cdots\]
    be $\N$ graded algebras over a field $\textbf{k}$,  where $\textbf{k} = A_0 = B_0$.
The \emph{Segre product} of $A$ and $B$ is the $\N$-graded algebra
    \[A\circ B:=\bigoplus_{i \geq 0}(A\circ B)_i\; \text{with} \;  (A\circ B)_i =  A_i\otimes_{\textbf{k}} B_i. \]
\end{dfn}

The Segre product $A\circ B$ is a subalgebra of the tensor product algebra $A\otimes B$. Note that the embedding is not a graded algebra
morphism, as it doubles grading. If $A$ and $B$ are locally finite then the Hilbert function of $A\circ B$
    satisfies
    \begin{equation}
    \label{eq:hilbfunction}
    h_{A\circ B}(t)=\dim(A\circ B)_t=\dim(A_t\otimes B_t)=\dim(A_t)\cdot\dim(B_t)=h_A(t)\cdot h_B(t),
    \end{equation}
and for the Hilbert series one has
\[
H_A(t)=  \Sigma _{n \geq 0} (\dim A_n) t^n,  \; \; H_B(t)=  \Sigma _{n \geq 0} (\dim B_n) t^n, \;\;  H_{A\circ B}(t)=  \Sigma _{n \geq 0} (\dim A_n)(\dim B_n) t^n.
\]

The Segre product, $A \circ B$
 inherits various properties from the two algebras $A$ and $B$. In particular, if both algebras are one-generated, quadratic, and Koszul, it follows from \cite[Chap 3.2, Proposition 2.1]{PoPo}  that  the algebra
$A \circ B$ is also one-generated, quadratic, and Koszul.

The following remark gives more concrete information about the space of quadratic relations  of $A\circ B$, see for example,  \cite{kristel}.

\begin{rmk}
\label{rmk:relAoB1} \cite{kristel}
Suppose that $A$ and $B$ are quadratic algebras generated in degree one by $A_1$ and $B_1$, respectively, written as:
\[\begin{array}{ll}
A= T(A_1)/ (\Re_A) \quad &\text{with}\; \Re_A \subset A_1\otimes A_1,\\
B= T(B_1)/ (\Re_B)\quad &\text{with}\; \Re_B \subset B_1\otimes B_1,
\end{array}
\]
where $T(-)$ is the tensor algebra and $(\Re_A), (\Re_B)$ are the ideals of relations of $A$ and $B$.

Then  $A\circ B$ is also a quadratic algebra generated in degree one by $A_1\otimes B_1$ and presented as
\begin{equation}
\label{eq:segrelations1}
A \circ B = T(A_1\otimes B_1)/ (\sigma^{23}( \Re_A \otimes B_1\otimes B_1 +A_1\otimes A_1\otimes \Re_B)),
\end{equation}
where
\[
\sigma^{23}(a_1\otimes a_2\otimes b_1\otimes b_2)= a_1\otimes b_1\otimes a_2\otimes b_2.
\]
\end{rmk}
As usual,  we take a
combinatorial approach to study quadratic algebras. The properties of $A$ will be
read off a presentation $A= \textbf{k} \langle X\rangle /(\Re_A)$,
where by convention $X$ is a fixed finite set of generators of
degree one, $|X|=n,$
and $(\Re_A)$ is the two-sided
ideal of relations, generated by a {\em finite} set $\Re_A$ of
homogeneous polynomials of degree two.

\subsection{Segre products of Yang-Baxter algebras, generators and relations}
Suppose $(X,r_1)$ and $(Y, r_2)$ are finite solutions of orders $|X|= m$ and $|Y|= n.$

Let
 $A =\cA(\textbf{k},  X, r_1)$, and  $B =\cA(\textbf{k},  Y, r_2)$  be the corresponding YB-algebras.
As in Convention \ref{rmk:conventionpreliminary1} we fix enumerations
\[X = \{x_1, \cdots, x_m\}, \quad  \quad Y = \{y_1, \cdots, y_n\},\]
and consider the degree-lexicographic orders on the free monoids $\langle X\rangle$, and  $\langle Y\rangle$ extending these enumerations.
Then
\begin{equation}
\label{eq:relations_A}
\begin{array}{c}
A = \textbf{k}\langle X  \rangle /(\Re_1)\;
\text{where $\Re_1$ is a set of $\binom{m}{2}$ binomial relations}:\\
\Re_1= \{x_jx_i- x_{i^{\prime}}x_{j^{\prime}}\mid x_jx_i > x_{i^{\prime}}x_{j^{\prime}}\;\; \text{and} \;   r_1(x_jx_i)= x_{i^{\prime}}x_{j^{\prime}}\}.
\end{array}
\end{equation}

\begin{equation}
\label{eq:relations_B}
\begin{array}{c}
B = \textbf{k}\langle Y  \rangle /(\Re_2)\;
\text{where $\Re_2$ is a set of $\binom{n}{2}$ binomial relations}:\\
\Re_2=\{y_by_a-y_{a^{\prime}}y_{b^{\prime}}\mid
y_by_a> y_{a^{\prime}}y_{b^{\prime}} \;\; \text{and} \;\;r_2(y_by_a) = y_{a^{\prime}}y_{b^{\prime}}
 \}.
\end{array}
\end{equation}
One has
\begin{equation}
\label{eq:dimensions}
\dim A_2= \binom{m+1}{2}, \quad  \dim B_2= \binom{n+1}{2}, \quad \dim (A\circ B)_2 = \binom{m+1}{2} \binom{n+1}{2}.
\end{equation}
\begin{rmk}
\label{rmk:fixedpts2}
Note that if $(X,r)$ is a quadratic set, then $r(xy) = xy$ \emph{iff}  ${}^xy=x$ and $x^y =y$, $x, y \in X$. Moreover, if the monoid $S(X,r)$ is with cancellation,
then $r(xy) = xy$ is equivalent to  ${}^xy=x$.
\end{rmk}

Let $\Ncal(A)$ be the set of normal monomials modulo the ideal $(\Re_1)$  in $\textbf{k}\langle X\rangle$
and let $\Ncal(B)$ be the set of normal monomials modulo the ideal $(\Re_2)$  in $\textbf{k}\langle Y\rangle$.

\begin{rmk}
\label{rmk:N2}
\begin{enumerate}

\item
A monomial  $xy\in \Ncal(A)_2$,  $x, y \in X$ \emph{iff} either (a) ${}^xy>x,$ in this case $f={}^xy. x^y -xy \in \Re_1, HM(f) = {}^xy. x^y,$  or
(b) $r_1(xy)=xy$, which is equivalent to ${}^xy=x$, since the monoid $S(X,r_1)$ is cancellative, see Remark \ref{rmk:fixedpts2}.
\item
$zt\in \Ncal(B)_2, z, t \in Y$,  \emph{iff} either (a) ${}^zt>z,$ in this case $g={}^zt z^t -zt \in \Re_2, HM(g) = {}^zt z^t$,  or (b) $r_2(zt)=zt$,
which is equivalent to ${}^zt=z$.
\end{enumerate}
Thus
\[
\Ncal(A)_2= \{xy \in X^2\mid {}^xy \geq x\},      \quad \Ncal(B)_2= \{zt \in Y^2\mid {}^zt \geq z\}.
\]
\end{rmk}

\begin{dfn}
\label{dfn:cartesianproductof solutions}
    Let $(X,r_X)$ and $(Y, r_Y)$ be disjoint set-theoretic solutions of YBE. \emph{The Cartesian product of the solutions $(X,r_X)$ and $(Y, r_Y)$} is defined as
$(X\times Y, \rho)$,  $\rho=\rho_{X\times Y}$, where the map
\[\rho: (X\times Y)\times  (X\times Y) \longrightarrow (X\times Y)\times  (X\times Y) \]
is given by
\[
\rho= \sigma_{23}\circ (r_X\times r_Y)\circ \sigma_{23}, \; \text{and} \; \sigma_{23}\;  \text{is the flip of the second and the third component}.
\]
In other words
\begin{equation}
\label{eq:cartproduct}
\rho((x_j, y_b),(x_i, y_a)) := (({}^{x_j}{x_i},  {}^{y_b}{y_a}), (x_j^{x_i}, y_b^{y_a})),
\end{equation}
for all $i, j \in \{1, \cdots, m\}$ and all  $a, b \in \{1, \cdots, n\}$.
It is easy to see that the Cartesian product $(X\times Y, \rho_{X\times Y})$ is a solution of YBE of order $mn$.
\end{dfn}

\begin{rmk}
\label{rmk:cartesianproductof solproperties} The Cartesian product of solutions $(X\times Y, \rho_{X\times Y})$ satisfies the following conditions.
\begin{enumerate}
\item $(X\times Y, \rho_{X\times Y})$ is nondegenerate \emph{iff} $(X,r_X)$ and $(Y, r_Y)$  are nondegenerate.
\item  $(X\times Y, \rho_{X\times Y})$ is involutive \emph{iff} $(X,r_X)$ and $(Y, r_Y)$  are involutive.
\item  $(X\times Y, \rho_{X\times Y})$ is a square-free solution \emph{iff} $(X,r_X)$ and $(Y, r_Y)$  are  square-free solutions.
\end{enumerate}
\end{rmk}

To simplify notation when we work with elements of the Segre product $A\circ B$ we shall write $x\circ y$ instead of $x\otimes y$, whenever $x\in X, y\in Y$,
or $u\circ v$ instead of $u\otimes v$,  whenever $u \in A_k,$ $v \in B_k.$

\begin{pronotation}
\label{pronotation}
Let $(X, r_1)$ and $(Y, r_2)$ be solutions on the disjoint sets $X = \{x_1, \cdots , x_m\}$, and $Y = \{y_1, \cdots , y_n\}$.
 Let $A\circ B$ be the Segre product of the YB algebras $A= \cA(\textbf{k}, X, r_1)$ and $B=\cA(\textbf{k}. Y, r_2)$,
and let
\[X\circ Y =\{ x_i\circ y_a \mid  1 \leq i \leq m, \; 1\leq a \leq n\}.\]
There is a natural structure of a solution  $(X\circ Y, r_{X\circ Y})$ on the set $X\circ Y$,
where the map $r_{X\circ Y}$ is defined as
\begin{equation}
\label{eq:def_r}
r_{X \circ Y}((x_j \circ y_b), (x_i\circ y_a)) := (({}^{x_j}{x_i}\circ {}^{y_b}{y_a}),     ({x_j}^{x_i}\circ y_b^{y_a})),
\end{equation}
for all $1 \leq i, j\leq m$ and all $1 \leq a, b \leq n.$
The solution $(X\circ Y, r_{X\circ Y})$ is
isomorphic to the Cartesian product of solutions
$(X\times Y, \rho_{X\times Y}).$
In particular, the solution $(X\circ Y, r_{X\circ Y})$ has cardinality $mn$ and $\binom{mn}{2}$ nontrivial $r_{X\circ Y}$-orbits.
\end{pronotation}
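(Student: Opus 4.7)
The plan is to prove everything at once by exhibiting an explicit bijection
\[
\phi\colon (X\times Y,\rho_{X\times Y}) \xrightarrow{\ \sim\ } (X\circ Y, r_{X\circ Y}),\qquad \phi(x_i,y_a):= x_i\circ y_a,
\]
of quadratic sets, and then transferring the properties of the Cartesian product solution, which have already been recorded in Remark \ref{rmk:cartesianproductof solproperties}. First, $\phi$ is a bijection of sets of cardinality $mn$ simply because $X\circ Y$ was introduced as the collection of formal symbols indexed by pairs $(i,a)$, one symbol for each element of $X\times Y$; this establishes the cardinality part of the claim immediately.

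Next, I compare the defining formulas side by side. The formula \eqref{eq:cartproduct} for $\rho_{X\times Y}$ and the formula \eqref{eq:def_r} for $r_{X\circ Y}$ are literally the same rule, only with the pair $(x_i,y_a)$ replaced by the formal symbol $x_i\circ y_a$. Hence
\[
(\phi\times\phi)\circ \rho_{X\times Y} \;=\; r_{X\circ Y}\circ(\phi\times\phi),
\]
so $\phi$ is an isomorphism of quadratic sets. Invoking Remark \ref{rmk:cartesianproductof solproperties}, since $(X,r_1)$ and $(Y,r_2)$ are nondegenerate involutive set-theoretic solutions of the Yang--Baxter equation, the same holds for $(X\times Y,\rho_{X\times Y})$; transporting along $\phi$, it holds also for $(X\circ Y, r_{X\circ Y})$. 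Thus $(X\circ Y, r_{X\circ Y})$ is a solution in the sense of Convention \ref{conv:convention1}, and the asserted isomorphism with the Cartesian product holds by construction.

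For the final assertion on orbits, I apply Corollary \ref{cor:orbits_and_fixedpoints}(2) to the solution $(X\circ Y, r_{X\circ Y})$ of order $N=mn$, which gives exactly $\binom{mn}{2}$ nontrivial $r_{X\circ Y}$-orbits. There is no substantive obstacle: the whole proposition reduces to the observation that the rule \eqref{eq:def_r} is just the Cartesian-product rule \eqref{eq:cartproduct} rewritten in the ``tensor'' notation $x\circ y$ appropriate for elements of degree one in the Segre product $A\circ B$, so every solution-theoretic property passes through the canonical bijection $\phi$.
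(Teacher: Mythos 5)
Your proposal is correct and follows essentially the same route as the paper: both arguments exhibit the canonical bijection between $X\circ Y$ and $X\times Y$ (the paper uses $F(x\circ y)=(x,y)$, the inverse of your $\phi$), verify the intertwining identity between $r_{X\circ Y}$ and $\rho_{X\times Y}$, and transfer the solution properties and the orbit count (via Corollary \ref{cor:orbits_and_fixedpoints}) along this isomorphism. The only cosmetic difference is that the paper justifies the distinctness of the $mn$ elements by noting that $X\circ Y$ is a basis of $(A\circ B)_1=A_1\otimes B_1$, rather than treating them as formal symbols.
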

\begin{proof}
The set $X\circ Y$ is a basis of $(A\circ B)_1 = A_1\otimes B_1$ and in particular it consists of $mn$ distinct elements.
The map $r: (X\circ Y )\times (X\circ Y )\longrightarrow (X\circ Y )\times (X\circ Y )$ defined via (\ref{eq:def_r}) is a well-defined bijection.
Consider the bijective map
\[F: X\circ Y\rightarrow X \times Y, \quad F(x\circ y) = (x, y) .\]
It follows from the definitions of the maps $\rho_{X\times Y}$ and $r_{X\circ Y}$
that
\[(F\times F)\cdot r_{X\circ Y}= \rho_{X\times Y}\cdot (F\times F).\]
Therefore $r_{X\circ Y}$ obeys the YBE, and $(X\circ Y, r_{X\circ Y})$ is a solution isomorphic to the Cartesian product of solutions
$(X\times Y, \rho_{X\times Y})$. In particular $(X\circ Y, r_{X\circ Y})$ is nondegenerate and involutive.
It is clear that $|X\circ Y|= mn$ and the solution $(X\circ Y, r_{X\circ Y})$ has $\binom{mn}{2}$ nontrivial $r_{X\circ Y}$-orbits.
\end{proof}
We shall often identify the solutions $(X\circ Y, r_{X\circ Y})$ and $(X\times Y, \rho_{X\times Y})$ and refer to $(X\circ Y, r_{X\circ Y})$ as "the Cartesian product of the solutions $(X, r_1)$ and $(Y, r_2)$".

\begin{pro}
\label{pro:relYB}
Let $(X, r_1)$ and $(Y, r_2)$ be solutions on the disjoint sets $X = \{x_1\cdots , x_m\}$, $Y = \{y_1\cdots , y_n\}$. In notation as above let $(X\circ Y, r=r_{X\circ Y})$ be the Cartesian product of the solutions  $(X, r_1)$ and $(Y. r_2)$. We order the set $X\circ Y$ lexicographically
$X \circ Y = \{x_1\circ y_1, \cdots, x_1\circ y_n, \cdots, x_m\circ y_n \}$.
The Yang-Baxter algebra
$\Alg = \Alg_{X\circ Y}= \cA(\textbf{k}, X\circ Y, r)$ is generated by the set $X \circ Y $  and has $\binom{mn}{2}$ quadratic defining relations
described in the two lists (\ref{eq:viprelations11})  and (\ref{eq:viprelations22}) below.
\begin{equation}
\label{eq:viprelations11}
\begin{array}{c}
f_{ji,ba}= (x_j\circ y_b) (x_i\circ y_a)- ({}^{x_j}{x_i}\circ {}^{y_b}{y_a})(x_j^{x_i}\circ y_b^{y_a}),\\
\text{for all}\; \;  1\leq i, j\leq m \; \; \text{such that}\;\; x_j > {}^{x_j}{x_i}, \;\; \text{and all}\; \;1 \leq a, b \leq n.
\end{array}
\end{equation}
Every relation $f_{ji,ba}$ has a leading monomial  $\LM (f_{ji,ba})= (x_j\circ y_b) (x_i\circ y_a)$.
\begin{equation}
\label{eq:viprelations22}
\begin{array}{c}
f_{ij,ba}= (x_i\circ y_b) (x_j\circ y_a)- (x_i\circ {}^{y_b}{y_a})(x_j\circ y_b^{y_a}),\\
\text{for all}\;\;  1\leq i, j\leq m \;\; \text{with}\;\; r_1( x_ix_j) = x_ix_j ,\; \; \text{and all}\; \; 1 \leq a, b \leq n, \;\; \text{such that}\;\;
y_b >{}^{y_b}{y_a} .
\end{array}
\end{equation}
Every relation $f_{ij,ba}$ has a leading monomial  $\LM (f_{ij,ba})= (x_i\circ y_b) (x_j\circ y_a)$.

The solution $(X\circ Y, r)$ has exactly $mn$ fixed points, namely:
\[
\cF = \{(x_p\circ y_a)(x_q\circ y_b)\mid r_1(x_px_q) = x_px_q, \; p, q \in \{1. \cdots, m\}, \;\text{and}\; r_2(y_ay_b) = y_ay_b, \;a, b \in \{1, \cdots, n\}\}
\]
In this case $x_px_q\in \Ncal(A)_2$  and $y_ay_b \in \Ncal(B)$.
\end{pro}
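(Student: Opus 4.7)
The plan is to read off the relations and the fixed points directly from the definition of the Cartesian-product solution $(X\circ Y, r)$, combined with the general enumeration provided by Corollary~\ref{cor:orbits_and_fixedpoints}. Since $(X\circ Y, r)$ is a nondegenerate symmetric set of order $mn$, that corollary tells us a priori that its Yang-Baxter algebra $\Alg$ has exactly $\binom{mn}{2}$ defining quadratic binomials (one per nontrivial $r$-orbit in $(X\circ Y)^2$) and that there are exactly $mn$ fixed points. My task is therefore to partition the nontrivial orbits into the two families (\ref{eq:viprelations11}) and (\ref{eq:viprelations22}), verify the claimed leading monomials under the lexicographic order, and check the count $\binom{m}{2}n^2 + m\binom{n}{2} = \binom{mn}{2}$.

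First I would handle the fixed points. By (\ref{eq:def_r}), the pair $((x_j\circ y_b),(x_i\circ y_a))$ is fixed by $r$ iff simultaneously ${}^{x_j}x_i = x_j$, $x_j^{x_i} = x_i$, ${}^{y_b}y_a = y_b$, and $y_b^{y_a} = y_a$. Via Remark~\ref{rmk:fixedpts2} (invoking cancellativity of $S(X,r_1)$ and $S(Y,r_2)$ from Facts~\ref{fact2}), these four equalities collapse to $r_1(x_jx_i)=x_jx_i$ and $r_2(y_by_a)=y_by_a$. Corollary~\ref{cor:orbits_and_fixedpoints}(1) gives exactly $m$ fixed points of $r_1$ and $n$ fixed points of $r_2$, so $\Fcal(X\circ Y, r)$ has cardinality $mn$, matching the expected count; and I observe in passing that $x_px_q\in\Ncal(A)_2$ and $y_ay_b\in\Ncal(B)_2$ in view of Remark~\ref{rmk:N2}.

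Next I would enumerate the nontrivial orbits. A pair $((x_j\circ y_b),(x_i\circ y_a))$ sits in a nontrivial orbit precisely when \emph{either} $r_1(x_jx_i)\neq x_jx_i$ \emph{or} $r_1(x_jx_i)=x_jx_i$ but $r_2(y_by_a)\neq y_by_a$. For the first case, since $r_1$ is involutive, each nontrivial $r_1$-orbit in $X^2$ contains a unique representative $(x_j,x_i)$ with $x_j > {}^{x_j}x_i$, giving $\binom{m}{2}$ admissible index pairs; coupled with the $n^2$ arbitrary choices of $(b,a)$ one sees that the orbit of $((x_j\circ y_b),(x_i\circ y_a))$ is automatically nontrivial, and the lex order on $X\circ Y$ (compare first coordinates $x_j$ vs.\ ${}^{x_j}x_i$) forces the leading monomial to be $(x_j\circ y_b)(x_i\circ y_a)$, yielding the relations in list (\ref{eq:viprelations11}). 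For the second case, the $x$-factor is unchanged by $r$, so ${}^{x_j}x_i = x_j$ (rewritten with $j\leftrightarrow i$ as in the statement: $r_1(x_ix_j)=x_ix_j$) and the nontriviality of the $y$-orbit supplies the representative with $y_b > {}^{y_b}y_a$, giving $m\binom{n}{2}$ relations; the leading monomial $(x_i\circ y_b)(x_j\circ y_a)$ follows because first coordinates agree and $y_b > {}^{y_b}y_a$.

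Finally I would close the argument with the arithmetic check
\[
\binom{m}{2}n^2 + m\binom{n}{2} \;=\; \tfrac{mn}{2}\bigl((m-1)n + (n-1)\bigr) \;=\; \tfrac{mn(mn-1)}{2} \;=\; \binom{mn}{2},
\]
so lists (\ref{eq:viprelations11}) and (\ref{eq:viprelations22}) are disjoint (one requires $r_1(x_jx_i)\neq x_jx_i$, the other requires $r_1(x_ix_j)=x_ix_j$) and together exhaust the $\binom{mn}{2}$ nontrivial orbits, hence exhaust the defining relations of $\Alg$. The main bookkeeping obstacle is making sure that the two families are indeed disjoint and that each nontrivial $r$-orbit of $(X\circ Y)^2$ is represented exactly once by the unique pair satisfying the explicit leading-monomial inequality; once this is clear, the proposition reduces to the counting identity above and a direct reading of (\ref{eq:def_r}).
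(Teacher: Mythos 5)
Your proposal is correct and follows essentially the same route as the paper's own proof: both identify the fixed points of $r_{X\circ Y}$ as the pairs whose $X$- and $Y$-components are simultaneously fixed, split the nontrivial $r_{X\circ Y}$-orbits according to whether the $X$-part is moved by $r_1$ or fixed with the $Y$-part moved by $r_2$, and close with the count $\binom{m}{2}n^2 + m\binom{n}{2} = \binom{mn}{2}$. Your treatment of the leading monomials and the appeal to cancellativity via Remark~\ref{rmk:fixedpts2} is slightly more explicit than the paper's, but the argument is the same.
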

\begin{proof}
The solution $(X\circ Y, r)$ is nondegenerate, it has order $|X\circ Y|= mn$, and therefore, by Corollary \ref{cor:orbits_and_fixedpoints}
the number of its fixed points is $mn$. It is clear that $r((x_p\circ y_a)(x_q\circ y_b)) = (x_p\circ y_a)(x_q\circ y_b)$ if and only if $r_1(x_px_q) = x_px_q$ and $r_2(y_ay_b) = y_ay_b$.
The defining relations of the Yang-Baxter algebra $\Alg$ correspond to the nontrivial $r$-orbits.
There are exactly $\binom{m}{2}n^2$ distinct relations given in (\ref{eq:viprelations11}), each of them corresponds to a pair $(x_j\circ y_b, x_i\circ y_a)$, where $x_jx_i> r_1(x_jx_i)$, and $y_by_a$ is an arbitrary word in $Y^2$. There are exactly $m\binom{n}{2}$ distinct relations in
(\ref{eq:viprelations22}), each of them is determined by a fixed point $x_ix_j$ in $X^2$ and some nontrivial $r_2$-orbit in $Y^2$.
Note that
\[\binom{m}{2}n^2 + m\binom{n}{2} = \binom{mn}{2},\]
as desired.
\end{proof}

The next corollary is a straightforward consequence from \cite[Chap 3, Proposition 2.1]{PoPo} and Facts \ref{fact2}.
\begin{cor}
\label{cor:SegreProductProperties}
Let $(X, r_1)$ and $(Y, r_2)$ be finite solutions and let $A= \cA(\textbf{k}, X, r_1)$ and $B=\cA(\textbf{k}, Y, r_2)$ be their Yang-Baxter algebras. Then the Segre product, $A \circ B$
is a one-generated quadratic and Koszul algebra.
\end{cor}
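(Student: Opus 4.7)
The plan is to verify that the two Yang-Baxter algebras $A = \cA(\textbf{k}, X, r_1)$ and $B = \cA(\textbf{k}, Y, r_2)$ satisfy the hypotheses of the general preservation result \cite[Chap.~3.2, Proposition 2.1]{PoPo}, which asserts that the Segre product of two one-generated quadratic Koszul $\textbf{k}$-algebras is itself one-generated, quadratic, and Koszul. The conclusion is then immediate by invoking that proposition.

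First I would recall from Convention \ref{rmk:conventionpreliminary1} and Definition \ref{def:algobjects} that, by construction, each algebra $\cA(\textbf{k}, X, r)$ associated with a finite nondegenerate symmetric set $(X, r)$ admits the presentation $\textbf{k}\langle X \rangle/(\Re)$ where $X$ is a finite set of generators in degree one and $\Re$ is a finite set of homogeneous binomial relations of degree two. In particular, both $A$ and $B$ are one-generated quadratic $\textbf{k}$-algebras with $A_0 = B_0 = \textbf{k}$, and they are locally finite since by Facts \ref{fact2} their Hilbert series equal $1/(1-t)^m$ and $1/(1-t)^n$, respectively.

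Next I would cite Facts \ref{fact2}(3)(b), which records \cite[Theorem 1.4]{GIVB}: the Yang-Baxter algebra of any finite nondegenerate symmetric set is Koszul. This supplies the remaining Koszulness hypothesis for both $A$ and $B$. Applying \cite[Chap.~3.2, Proposition 2.1]{PoPo} to the pair $(A, B)$ yields at once that $A \circ B$ is one-generated, quadratic, and Koszul, as desired. There is no substantial obstacle; the corollary is simply the specialization, to the Yang-Baxter setting, of a well-known result that one-generatedness, quadraticity, and Koszulness are all preserved under the Segre product construction.
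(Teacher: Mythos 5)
Your proposal matches the paper's own argument exactly: the paper derives this corollary as "a straightforward consequence of \cite[Chap 3, Proposition 2.1]{PoPo} and Facts \ref{fact2}," i.e.\ it checks that $A$ and $B$ are one-generated quadratic Koszul algebras (Koszulness coming from \cite[Theorem 1.4]{GIVB} as recorded in Facts \ref{fact2}) and then invokes the preservation result for Segre products. Your write-up is correct and takes essentially the same route, merely spelling out the hypothesis-checking in slightly more detail.
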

Moreover,  $A \circ B$ is also a left and a right Noetherian algebra with polynomial growth, see Corollary \ref{cor:SegreProdNoetherian}.

 \begin{thm}
\label{thm:rel_segre}
 Let $(X, r_1)$ and $(Y, r_2)$ be finite solutions, where $X = \{x_1\cdots , x_m\}$ and $Y = \{y_1\cdots , y_n\}$  are disjoint sets.
 Let $A\circ B$ be the Segre product of the YB algebras $A= \cA(\textbf{k}, X, r_1)$ and $B=\cA(\textbf{k}, Y, r_2)$, and let $(X\circ Y, r_{X\circ Y})$ be the solution from Proposition \ref{pronotation}.

The algebra $A\circ B$ has a set of $mn$ one-generators $W=X\circ Y$
ordered lexicographically:
\begin{equation}
\label{eq:W}
W = \{w_{11}= x_1\circ y_1< w_{12}= x_1\circ y_2 <\cdots <w_{1n}= x_1\circ y_n< w_{21}= x_2\circ y_1< \cdots < w_{mn}= x_m\circ y_n\},
\end{equation}
 and a set of
$\binom{mn}{2}+\binom{m}{2}\binom{n}{2}$ linearly independent quadratic relations $\Re$. The set $\Re$ splits as a disjoint union
$\Re= \Re_a \cup \Re_b$, where the sets $\Re_a$ and $\Re_b$ are described below.

\begin{enumerate}
\item The set $\Re_a$ is a disjoint union  $\Re_a= \Re_{a1} \cup
\Re_{a2}$ of two sets described as follows.
\[
\begin{array}{ll}
\Re_{a1}= &\{f_{ji,ba}=  (x_j\circ y_b) (x_i\circ y_a)- (x_{i^{\prime}}\circ y_{a^{\prime}})(x_{j^{\prime}}\circ y_{b^{\prime}}), \; 1 \leq i,j\leq m, 1 \leq a,b\leq n,\\
&\text{where}\; r_1(x_jx_i)=x_{i^{\prime}}x_{j^{\prime}}, \;\text{with} \;   j > i^{\prime}, \;
\text{and} \;  r_2(y_by_a) = y_{a^{\prime}}y_{b^{\prime}}\}.
\end{array}
\]
Every relation $f_{ji,ba}$ has leading monomial  $\LM (f_{ji,ba})= (x_j\circ y_b) (x_i\circ y_a)$.
The cardinality of $\Re_{a1}$ is $|\Re_{a1}|=\binom{m}{2}n^2$.

\[
\begin{array}{ll}
\Re_{a2}= &\{f_{ij,ba}= (x_i\circ y_b) (x_j\circ y_a)- (x_{i}\circ y_{a^{\prime}})(x_{j}\circ y_{b^{\prime}}), \; 1 \leq i,j\leq m,\; 1 \leq a,b\leq n,\\
 &\text{where}\;  x_ix_j = r_1( x_ix_j) \;\text{is a fixed point}\; \text{and }\; r_2(y_by_a) = y_{a^{\prime}}y_{b^{\prime}},\; \text{with}\; b > a^{\prime}\}.
\end{array}
\]
Every relation $f_{ij,ba}$ has leading monomial  $\LM (f_{ij,ba})= (x_i\circ y_b) (x_j\circ y_a)$.
The cardinality of $\Re_{a2}$ is $|\Re_{a2}|=m\binom{n}{2}$.

\item
The set $\Re_b$ consists of $\binom{m}{2}\binom{n}{2}$ relations given explicitly in (\ref{eq:viprelations3})
\begin{equation}
\label{eq:viprelations3}
\begin{array}{ll}
\Re_{b}= &\{g_{ij,ba}= (x_i\circ y_b) (x_j\circ y_a)- (x_{i}\circ y_{a^{\prime}})(x_{j}\circ y_{b^{\prime}}), \; 1 \leq i,j\leq m, 1 \leq a,b\leq n,\\
&\text{where}\; r_1( x_ix_j) > x_ix_j , \; r_2(y_by_a) = y_{a^{\prime}}y_{b^{\prime}} \;
 \text{and}\;\;b >a^{\prime}.
\end{array}
\end{equation}
 Every relation $g_{ij, ba}$ has leading monomial  $\LM (g_{ij, ba})= (x_i\circ y_b) (x_j\circ y_a)$.
\end{enumerate}

 \end{thm}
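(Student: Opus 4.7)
The plan is to derive the presentation by combining the general description of Segre-product relations from Remark \ref{rmk:relAoB1} with a Hilbert-dimension count and a leading-monomial argument for linear independence. Since $A$ and $B$ are each one-generated in degree one, the set $W=X\circ Y$ is a $\textbf{k}$-basis of $(A\circ B)_1 = A_1\otimes B_1$ of cardinality $mn$; fix the degree-lexicographic order on $\langle W\rangle$ extending (\ref{eq:W}). Remark \ref{rmk:relAoB1} identifies the relation space
\[R := R(A\circ B) = \sigma^{23}\bigl(\Re_1 \otimes B_1\otimes B_1 + A_1\otimes A_1\otimes \Re_2\bigr) \subset (A_1\otimes B_1)^{\otimes 2},\]
which after unfolding $\sigma^{23}$ is spanned by two families of ``raw'' generators: family (I), the elements $(x_j\circ y_b)(x_i\circ y_a) - (x_{i'}\circ y_b)(x_{j'}\circ y_a)$ indexed by $x_jx_i - x_{i'}x_{j'}\in\Re_1$ and $y_a,y_b\in Y$, and family (II), the elements $(x_i\circ y_b)(x_j\circ y_a) - (x_i\circ y_{a'})(x_j\circ y_{b'})$ indexed by $y_by_a - y_{a'}y_{b'}\in\Re_2$ and $x_i,x_j\in X$. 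Using $\dim(A\circ B)_2 = \dim A_2\cdot\dim B_2 = \binom{m+1}{2}\binom{n+1}{2}$ from Corollary \ref{cor:dimAd} and (\ref{eq:dimensions}), a short calculation yields
\[\dim R = (mn)^2 - \binom{m+1}{2}\binom{n+1}{2} = \binom{mn}{2} + \binom{m}{2}\binom{n}{2},\]
matching $|\Re_a\cup\Re_b|$ exactly.

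Next, I would check that each relation in $\Re_a\cup\Re_b$ lies in $R$. The elements of $\Re_{a2}$ and $\Re_b$ are already of family (II), while each $f_{ji,ba}\in\Re_{a1}$ is the telescoping sum of a family-(I) generator (applying $r_1$ to $x_jx_i$ with $y_by_a$ fixed) and a family-(II) generator applied to the intermediate monomial $(x_{i'}\circ y_b)(x_{j'}\circ y_a)$; equivalently, each such binomial visibly vanishes in $A\otimes B$ via $x_jx_i = x_{i'}x_{j'}$ and $y_by_a = y_{a'}y_{b'}$. For linear independence I classify ordered pairs in $X^2$ (resp.\ $Y^2$) following Remark \ref{rmk:N2} as (a) $r$-fixed points, (b) larger elements of nontrivial orbits, (c) smaller ones. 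Inspection of the statement then gives
\[\begin{array}{l}
\LM(\Re_{a1}) = \{(x_j\circ y_b)(x_i\circ y_a) \mid x_jx_i \text{ type (b), } y_by_a \text{ arbitrary}\},\quad |\Re_{a1}|=\binom{m}{2}n^2;\\
\LM(\Re_{a2}) = \{(x_i\circ y_b)(x_j\circ y_a) \mid x_ix_j \text{ type (a), } y_by_a \text{ type (b)}\},\quad |\Re_{a2}|=m\binom{n}{2};\\
\LM(\Re_b)   = \{(x_i\circ y_b)(x_j\circ y_a) \mid x_ix_j \text{ type (c), } y_by_a \text{ type (b)}\},\quad |\Re_b|=\binom{m}{2}\binom{n}{2}.
\end{array}\]
These three leading-monomial sets are pairwise disjoint (distinguished by the type of the $x$-pair) and each is multiplicity-free, producing $\binom{mn}{2}+\binom{m}{2}\binom{n}{2}$ distinct leading monomials. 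Hence $\Re_a\cup\Re_b$ is $\textbf{k}$-linearly independent, and the dimension count of the previous paragraph forces it to be a basis of $R$.

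The main technical obstacle is the combinatorial bookkeeping that justifies the partition of $x$-pairs into types (a), (b), (c) across $\Re_{a2}$, $\Re_{a1}$, $\Re_b$ respectively. One must verify that the raw generators omitted from this list are exactly the redundant ones: a family-(II) generator whose $x$-pair is of type (b) can be written as the difference of the two $\Re_{a1}$-relations along the $r_1$-orbit of $x_jx_i$ minus one $\Re_b$-relation on the image $y$-orbit, and a family-(I) generator with $y$-pair a fixed point coincides with a relation in $\Re_{a1}$. Together with the dimension identity $\dim R = |\Re_a\cup\Re_b|$, these cancellations exhaust all linear dependencies among the raw generators, so no further relations are missed.
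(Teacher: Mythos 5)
Your proposal is correct and shares the paper's overall skeleton --- verify $\Re\subset R$, prove linear independence, and close with the dimension count $\dim R=(mn)^2-\binom{m+1}{2}\binom{n+1}{2}=\binom{mn}{2}+\binom{m}{2}\binom{n}{2}$ --- but you execute two of the steps by genuinely different and simpler means. For membership, the paper's Lemma \ref{lem:segreproductrel}(3) obtains $g_{ij,ba}\in(\Re(A\circ B))$ by a detour through the relation attached to the $r_1$-partner of $x_ix_j$; you instead note that $g_{ij,ba}$, like the elements of $\Re_{a2}$, is literally an element of $\sigma^{23}(A_1\otimes A_1\otimes \Re_2)$, so nothing needs to be computed. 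For independence, the paper proves the three-step Lemma \ref{lem:independence} (separate arguments for $\Re_a$, for $\Re_b$, and for their union via an intersection of spanned subspaces), even though it has already recorded that the leading monomials of the elements of $\Re$ are pairwise distinct; you justify that distinctness honestly --- the type of the $x$-pair (fixed point, larger, or smaller element of its $r_1$-orbit) separates $\Re_{a2}$, $\Re_{a1}$ and $\Re_b$, and within each family the leading monomial determines the relation --- and then invoke the standard fact that polynomials with pairwise distinct leading monomials are linearly independent, which renders the paper's Lemma \ref{lem:independence} superfluous. The only inefficiency on your side is the closing paragraph: once one knows $\Re\subset R$, $\Re$ linearly independent, and $|\Re|=\dim R$, the set $\Re$ is automatically a basis of $R$, so the discussion of which raw generators are redundant is not needed, as you yourself acknowledge.
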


\begin{proof}

Note that the relations in $\Re_a$ are the same as the defining relations of the Yang-Baxter algebra
$\Alg_{X\circ Y}= \cA(\textbf{k}, X\circ Y, r_{X\circ Y})$  from Proposition
\ref{pro:relYB}. There is an obvious 1-1 correspondence between the set of nontrivial $ r_{X\circ Y}$- orbits in $X\circ Y \times X\circ Y$
and the set of relations $\Re_a$.  Being a nondegenerate symmetric set of order $mn$, $(X\circ Y, r_{X\circ Y})$ has exactly $\binom{mn}{2}$ nontrivial
$ r_{X\circ Y}$-orbits and therefore the cardinality of $\Re_a$ must satisfy
\begin{equation}
\label{eq:cardRea}
|\Re_a| =\binom{mn}{2}.
\end{equation}
It is clear that $\Re_{a1}$ and $\Re_{a2}$ are disjoint subsets of $\Re_{a}$. To be sure that the sets $\Re_{a1}$ and $\Re_{a2}$ exhaust $\Re_{a}$ we count their cardinalities.

Each of the relations $f_{ji, ba} \in \Re_{a1}$
 corresponds to a pair $(x_j\circ y_b, x_i\circ y_a)$, where $x_jx_i> r_1(x_jx_i)$, and $y_by_a$ is an arbitrary word in $Y^2$. There are exactly $\binom{m}{2}n^2$ distinct elements of this type.

 Each of the relations $f_{ij, ba} \in \Re_{a2}$
is determined by a fixed point $x_ix_j$ in $X^2$ and some nontrivial $r_2$-orbit in $Y^2$, $\{y_by_a, y_{a^{\prime}}y_{b^{\prime}}=r_2(y_by_a)\}$ with $b > a^{\prime}$. There are $m\binom{n}{2}$ distinct elements of this type.
One has
\[|\Re_{a1}|+|\Re_{a2}|= \binom{m}{2}n^2 + m\binom{n}{2} = \binom{mn}{2}=|\Re_{a}|,\]
as desired.

We shall prove now that the sets $\Re_a$ and $\Re_b$ described above are contained in the ideal of relations
$(\Re(A\circ B))$ of $A\circ B$.
Under the hypothesis of the theorem we prove the following lemma.

 \begin{lem}
\label{lem:segreproductrel}
\begin{enumerate}
\item
\label{lem:segreproductrel1}
Suppose $f=x_jx_i-x_{i^{\prime}}x_{j^{\prime}}\in \Re_1$, with $HM(f) = x_jx_i$. Let $y_b, y_a \in Y$,  and let
$r_2(y_by_a)=y_{a^{\prime}}y_{b^{\prime}}$ (it is possible that $y_by_a$ is a fixed point, or $y_by_a < y_{a^{\prime}}y_{b^{\prime}}$).  Then
\[
f_{ji,ba}= (x_j\circ y_b)(x_i\circ y_a) - (x_{i^{\prime}}\circ y_{a^{\prime}})  (x_{j^{\prime}}\circ y_{b^{\prime}}) \in (\Re(A\circ B)).
\]
Moreover, the relation $f_{ji,ba}$ has leading monomial  $\LM (f_{ji,ba})= (x_j\circ y_b) (x_i\circ y_a)$.

\item
\label{lem:segreproductrel2}
Suppose $r_1( x_ix_j) = x_ix_j $ (that is $x_ix_j$ is a fixed point), for some $1\leq i, j \leq m$, and let $r(y_by_a) =y_a^{\prime}y_b^{\prime}$,
with $y_b >y_{a^{\prime}}$, (so $y_by_a -y_a^{\prime}y_b^{\prime} \in \Re_2$). Then
\[
f_{ij,ba}= (x_i\circ y_b)(x_j\circ y_a) - (x_{i}\circ y_{a^{\prime}})  (x_{j}\circ y_{b^{\prime}}) \in (\Re(A\circ B)),
\]
and  $\LM (f_{ij,ba})= (x_i\circ y_b) (x_j\circ y_a)$.
\item
\label{lem:segreproductrel3}
If $r_1(x_ix_j) > x_ix_j$, and $y_by_a - y_{a^{\prime}} y_{b^{\prime}} \in \Re_2$, then
\[
g_{ij,ba}= (x_i\circ y_b)(x_j\circ y_a) - (x_i\circ y_{a^{\prime}})  (x_j\circ y_{b^{\prime}}) \in (\Re(A\circ B)),
\]
and $\LM (g_{ij,ba})= (x_i\circ y_b) (x_j\circ y_a)$.
\end{enumerate}
\end{lem}
\begin{proof}
(\ref{lem:segreproductrel1}) By hypothesis
$x_jx_i-x_{i^{\prime}}x_{j^{\prime}}\in \Re_1$ and $y_by_a-y_{a^{\prime}}y_{b^{\prime}}$ is in the ideal $(\Re_2).$
Then, by Remark \ref{rmk:relAoB1}
\[
\begin{array}{ll}
\varphi &=\sigma_{23}((x_jx_i-  x_{i^{\prime}}x_{j^{\prime}})\circ(y_by_a))\\
            &=(x_j\circ y_b)(x_i\circ y_a)-
(x_{i^{\prime}}\circ y_b)(x_{j^{\prime}}\circ y_a)
\in (\Re(A\circ B))\\
\psi &= \sigma_{23}((x_{i^{\prime}}x_{j^{\prime}})\circ(y_by_a - y_{a^{\prime}} y_{b^{\prime}}))\\
       &=(x_{i^{\prime}}\circ y_b)  (x_{j^{\prime}}\circ y_a) -
            (x_{i^{\prime}}\circ   y_{a^{\prime}})  (x_{j^{\prime}}\circ y_{b^{\prime}} ) \in (\Re(A\circ B)).
\end{array}
\]
The elements $\varphi$ and $\psi$ are in the ideal of relations  $(\Re(A\circ B))$, so the sum $\varphi + \psi$ is also in
$(\Re(A\circ B))$. One has
\[
\varphi + \psi =
(x_j\circ y_b)(x_i\circ y_a)- (x_{i^{\prime}}\circ   y_{a^{\prime}})  (x_{j^{\prime}}\circ y_{b^{\prime}} ) = f_{ji,ba} \in (\Re(A\circ B)).
\]
By definition, $f=x_jx_i-x_{i^{\prime}}x_{j^{\prime}}\in \Re_1$ \emph{iff}
$r_1(x_jx_i) = x_{i^{\prime}}x_{j^{\prime}}$ and $x_jx_i >x_{i^{\prime}}x_{j^{\prime}}$. The cancellation low implies that $x_j >x_{i^{\prime}}$.
Thus $(x_{i^{\prime}}\circ   y_{a^{\prime}})  (x_{j^{\prime}}\circ y_{b^{\prime}} )  <( x_j\circ y_b)(x_i\circ y_a)$, and
$\LM (f_{ji,ba})= (x_j\circ y_b) (x_i\circ y_a)$.

(\ref{lem:segreproductrel2})   By hypothesis $y_by_a - y_{a^{\prime}} y_{b^{\prime}} \in \Re_2$, then by
Remark \ref{rmk:relAoB1} again,
\[
\begin{array}{ll}
f_{ij,ba}&=\sigma_{23}((x_ix_j)\circ(y_by_a-y_{a^{\prime}} y_{b^{\prime}}))\\
            &=(x_i\circ y_b)(x_j\circ y_a)-  (x_i\circ y_{a^{\prime}})(x_j\circ  y_{b^{\prime}})
\in (\Re(A\circ B)).
\end{array}
\]
It is clear that $HM (f_{ij,ba})= (x_i\circ y_b)(x_j\circ y_a)$ which proves (2).

(\ref{lem:segreproductrel3}).
Suppose  $y_by_a - y_{a^{\prime}} y_{b^{\prime}} \in \Re_2$, and $r_1(x_ix_j) > x_ix_j$. Then
$r_1(x_ix_j) =x_{j^{\prime}}x_{i^{\prime}}$ and $r_1 (x_{j^{\prime}}x_{i^{\prime}})=x_ix_j$ for some $1 \leq j^{\prime}, i^{\prime} \leq m$, so
$x_{j^{\prime}}x_{i^{\prime}}-x_ix_j\in \Re_1.$
By
Remark \ref{rmk:relAoB1}

\[
\begin{array}{ll}
\varphi_1 &=\sigma_{23}(x_{j^{\prime}}x_{i^{\prime}}-x_ix_j)\circ(y_by_a))\\
            &=(x_{j^{\prime}}\circ y_b)(x_{i^{\prime}}\circ y_a)-
(x_{i}\circ y_b)(x_{j}\circ y_a)
\in (\Re(A\circ B)).
\end{array}
\]

By part (1)
\[
f_{j^{\prime}i^{\prime},ba}= (x_{j^{\prime}}\circ y_b)(x_{i^{\prime}}\circ y_a) - (x_{i}\circ y_{a^{\prime}})  (x_{j}\circ y_{b^{\prime}}) \in (\Re(A\circ B)).
\]

It follows that $f_{j^{\prime}i^{\prime},ba} - \varphi_1\in (\Re(A\circ B)).$ The explicit computation gives
\[
\begin{array}{ll}
f_{j^{\prime}i^{\prime},ba} - \varphi_1 &= (x_{j^{\prime}}\circ y_b)(x_{i^{\prime}}\circ y_a) -
(x_{i}\circ y_{a^{\prime}})  (x_{j}\circ y_{b^{\prime}})- (x_{j^{\prime}}\circ y_b)(x_{^{\prime}}\circ y_a)+
(x_{i}\circ y_b)(x_{j}\circ y_a)\\
&= (x_{i}\circ y_b)(x_{j}\circ y_a)- (x_{i}\circ y_{a^{\prime}})  (x_{j}\circ y_{b^{\prime}})=g_{ij,ba}.
\end{array}
\]
We have shown that $g_{ij,ba}\in (\Re(A\circ B)).$ It is clear that $\LM (g_{ij,ba})= (x_i\circ y_b) (x_j\circ y_a)$.
\end{proof}
Note that the sets $\Re_a$ and $\Re_b$ consist of quadratic polynomials in the set $X\circ Y$ of one-generators of $A\circ B$.
It follows from Lemma \ref{lem:segreproductrel} that every element of $\Re =\Re_a \cup \Re_b$ is a relation of $A\circ B$.

We have to show that the elements of $\Re$ form a basis of the ideal of relations $(\Re(A\circ B))$.
It will be convenient to use the description of $\Re_a$ and $\Re_b$ as sets of quadratic polynomials in the variables $W$, see (\ref{eq:W}), so we simply replace $x_i\circ y_a$ by $w_{ia}$ in each of the relations in $\Re$.

\begin{rmk}
\label{rmk:relinW}
Theorem \ref{thm:rel_segre}  states that the Segre product $A\circ B$ has a finite presentation
\[
A\circ B \simeq \textbf{k} \langle w_{11}, \cdots, w_{mn}\rangle/(\Re) ,
\]
where $\Re$ is a set of $\binom{mn}{2}+\binom{m}{2}\binom{n}{2}$  quadratic polynomials in the free associative algebra
$\textbf{k}\langle w_{11}, \cdots, w_{mn}\rangle$. More precisely, $\Re$ is a disjoint union
 $\Re= \Re_a \cup \Re_b$ of the sets $\Re_a$ and $\Re_b$ described below.
\begin{enumerate}
\item The set $\Re_a$ consists of $\binom{mn}{2}$ relations given explicitly in (\ref{eq:viprelations1a})  and  (\ref{eq:viprelations2a}) :
\begin{equation}
\label{eq:viprelations1a}
\begin{array}{c}
f_{ji,ba}= w_{jb}w_{ia}-w_{i^{\prime}a^{\prime}}w_{j^{\prime}b^{\prime}}, 1\leq i, j \leq m, 1 \leq a, b \leq n,
\\
\text{where}\;
r_1(x_jx_i)  = x_{i^{\prime}}x_{j^{\prime}}, \;j > i^{\prime}\; \text{and} \; r_2(y_by_a)=y_{a^{\prime}}y_{b^{\prime}}.
\end{array}
\end{equation}
Every relation $f_{ji,ba}$ has leading monomial  $\LM (f_{ji,ba})= w_{jb}w_{ia}$.

\begin{equation}
\label{eq:viprelations2a}
\begin{array}{c}
f_{ij,ba}= w_{ib}w_{ja}- w_{ia^{\prime}} w_{jb^{\prime}}, 1\leq i,j\leq m, 1 \leq a,b\leq n,\\
 \text{where}\; r_1( x_ix_j) = x_ix_j,  \text{and}\;
r_2(y_by_a) = y_{a^{\prime}}y_{b^{\prime}}\; \text{with}\;\; b > a^{\prime}.
\end{array}
\end{equation}
Every relation $f_{ij,ba}$ has leading monomial  $\LM (f_{ij,ba})= w_{ib}w_{ja}$.

\item
The set $\Re_b$ consists of $\binom{m}{2}\binom{n}{2}$ relations given explicitly in (\ref{eq:viprelations3b})
\begin{equation}
\label{eq:viprelations3b}
\begin{array}{c}
g_{ij, ba}= w_{ib}w_{ja}- w_{ia^{\prime}} w_{jb^{\prime}},   1\leq i,j\leq m, 1 \leq a,b\leq n, \\
\text{where}\; r_1( x_ix_j) > x_ix_j ,\; \;\text{and}\; \; r_2(y_by_a) = y_{a^{\prime}}y_{b^{\prime}}\;\;
 \text{with}\;\;b >a^{\prime}.
\end{array}
\end{equation}
 Every relation $g_{ij,ba}$ has leading monomial  $\LM (g_{ij,ba})= w_{ib}w_{ja}$.
\end{enumerate}
\end{rmk}



Next we count the relations in $\Re_b$. The number of $x_ix_j, 1 \leq i, j \leq m$ such that $r(x_ix_j) > x_ix_j$ is exactly the number of nontrivial $r_1$-orbits in $X \times X$  which is $\binom{m}{2}$. The number of pairs $y_b,y_a$ with $y_by_a > r_2(y_by_a)$ equals the number of nontrivial $r_2$ -orbits,
which is $\binom{n}{2}$ ,
hence
\begin{equation}
\label{eq:cardReb}
|\Re_b| =\binom{m}{2}\binom{n}{2}.
\end{equation}
The two sets $\Re_a$ and $\Re_b$ are disjoint. Indeed, the leading monomials of all elements in $\Re$ are pairwise distinct, and therefore the relations are pairwise distinct.
So $\Re=\Re_a\cup \Re_b$ is a disjoint union of sets, and by  (\ref{eq:cardRea})  and  (\ref{eq:cardReb})  one has:
\begin{equation}
\label{eq:cardRe}
|\Re|=|\Re_a|+|\Re_b| =\binom{mn}{2} +\binom{m}{2}\binom{n}{2}.
\end{equation}
It remains to show that $\Re$ is a linearly  independent set.

\begin{lem}
    \label{lem:independence}
Under the hypothesis of Theorem \ref{thm:rel_segre},   the set of
polynomials $\Re \subset \textbf{k}\langle W \rangle$ is linearly independent.
\end{lem}
\begin{proof}
This proof is routine. Note that the set of all words in $\langle W\rangle$ forms a basis of the free associative algebra
$\textbf{k}\langle W\rangle$ (considered as a vector space), in particular every finite set
of distinct words in $\langle W\rangle$ is linearly independent. Consider the presentation of $\Re$ given in Remark \ref{rmk:relinW}.
All words occurring in
$\Re$ are monomials of length 2 in $W^{2}$, but some of them occur in more than one
relation, e.g. the leading monomial $w_{ib}w_{ja}$, of $g_{ijba}$  occurs as a second monomial in some $f$ given in (\ref{eq:viprelations1a}).
Indeed, there is unique pair $j_1, i_1$ such that $r(x_{j_1}x_{i_1}) =x_ix_j$, $j_1> i$. It is clear that $r_2(y_{a^{\prime}}y_{b^{\prime}}) =y_by_a$ (since $r_2$ is involutive). Then, by definition
\[f_{j_1i_1, a^{\prime}b^{\prime}} = (x_{j_1}\circ y_{a^{\prime}})( x_{i_1}\circ y_{b^{\prime}}) - (x_i \circ y_b)(x_j\circ y_a)=
w_{j_1a^{\prime}}w_{i_1b^{\prime}}- w_{ib}w_{ja}. \]

We shall prove the lemma in three steps.

\begin{enumerate}
\item
  The set of polynomials $\Re_{a}\subset \textbf{k}\langle w_{11}, \cdots,   w_{mn}\rangle$ is linearly
  independent.
We have noticed that the polynomials in $\Re_a$ are in 1-to-1 correspondence with the
nontrivial $r_{X\circ Y}$-oprbits in $(X\circ Y)\times (X\circ Y)$. The orbits are disjoint and therefore
the relations $\Re_a$ involve exactly $m^2n^2-mn$ distinct monomials in $W^2$.
A linear relation
\[
\sum_{f \in \Re_a}\alpha_{f} f = 0, \text{where
all}\; \alpha_{f} \in \textbf{k},
 \]
involves only pairwise distinct monomials in $W^{2}$ and therefore it must be
trivial: $\alpha_f= 0$, for all $f \in \Re_a$.
It follows that $\Re_a$ is linearly independent.

\item
The set  $\Re_{b} \subset \textbf{k}\langle W \rangle$ is linearly independent.

Assume the contrary.  Then there exists a nontrivial linear relation for the elements of $\Re_b:$
\begin{equation}
\label{eq:Rb1a}
\sum_{g \in \Re_b}\beta_{g} g = 0,\; \text{with all}\; \beta_{g}
\in \textbf{k}.
\end{equation}
Let $g_{ij,ba}$ be the polynomial with $\beta_{g_{ij,ba}}\neq 0$ whose leading monomial is the highest among all leading monomials of polynomials
$g\in \Re_b$, with $\beta_g \neq 0$. So we have

\begin{equation}
\label{eq:Rb2a}
\\LM(g_{ij,ba}) = w_{ib}w_{ja} > \LM(g), \; \text{for all}\; g \in \Re_b,\; \text{where}\; \beta_{g} \neq 0.
\end{equation}
We use (\ref{eq:Rb1a})  to find the following equality in $\textbf{k}\langle W \rangle$:
\[
w_{ib}w_{ja} =   w_{ia^{\prime}}w_{jb^{\prime}} -\sum_{g \in \Re_b, \;
\LM(g)< w_{ib}w_{ja}} \; \frac{\beta_g}{\beta_{g_{ij,ba}}}\;g.
\]
It follows from  (\ref{eq:Rb2a})  that the right-hand side of this equality  is a linear combination of monomials strictly smaller  than $w_{ib}w_{ja} $ (in the lexicographic order on $\langle W \rangle$ ), which is impossible.
It follows that the set $\Re_{b} \subset \textbf{k}\langle W \rangle$ is linearly independent.

\item The set  $\Re \subset \textbf{k}\langle W \rangle$ is linearly independent.

Assume that the polynomials in $\Re$ satisfy a linear relation
\begin{equation}
  \label{eq:linindept1a}
\sum_{f \in \Re_a}\alpha_{f} f +\sum_{g \in \Re_b
}\beta_{g} g = 0, \;\text{where all}\;\; \alpha_{f},
\beta_{g} \in \textbf{k}.
\end{equation}
Every $f \in \Re_a$ can be written $f= u_f - u^{\prime}_f$, where $u_f, u^{\prime}_f\in W^2, u_f > u^{\prime}_f.$
Similarly, every $g \in \Re_b$ is $g= u_g - u^{\prime}_g$, where $u_g, u^{\prime}_g\in W^2, u_g > u^{\prime}_g$.
This gives the following equality in the free associative algebra
$\textbf{k}\langle W \rangle$ :

\begin{equation}
  \label{eq:linindept2a}
S_1 =\sum_{f\in \Re_a}\alpha_{f} u_f =
\sum_{f\in \Re_a}\alpha_{f}   u^{\prime}_f
-\sum_{g\in \Re_b}\beta_{g} g=S_2.
\end{equation}
The element $S_1 =\sum_{f   \in \Re_a}\alpha_{f} u_f $ on the
left-hand side of (\ref{eq:linindept2a}) is in the space
$V_1=\Span B_1$, where $B_1=\LM(\Re_a)=\{ u_f \mid f \in \Re_a\}$ is linearly
independent since it consists of distinct monomials. The element $S_2$
on the right-hand side of the equality is in the space $V_2= \Span B,$
where
\[
B=\{ u_f^{\prime}\mid f \in \Re_a\}
\cup
\{\text{all monomials} \; u_g, u^{\prime}_g \mid g\in \Re_b \}.
\]
Take a subset $B_2 \subset B$ which forms a basis of $V_2$. Note that
$B_1\cap B = \emptyset,$ hence $B_1\cap B_2=\emptyset.$
Moreover each of the sets $B_1$, and $B_2$ consists of pairwise distinct
monomials in $W^2$ and it is easy to show that
$V_1\cap V_2 = \{0\}.$  Thus the equality $S_1 = S_2 \in V_1\cap V_2 = \{0\}$ implies a linear
relation
\[S_1 =\sum_{f\in \Re_a}\alpha_{f} u_f
= 0, \]
for the set $B_1$ of leading monomials of $\Re_a$. But $B_1$ consists of  pairwise distinct monomials, and
therefore it is linearly independent. It follows that   all coefficients $\alpha_f,  f \in \Re_a$ equal $0$.
 This together with (\ref{eq:linindept1a}) implies the
linear relation
\[\sum_{g \in \Re_b}\beta_{g} g = 0,\]
 and since by (2) $\Re_b$ is linearly independent we get again $\beta_{g}= 0,
 \forall  \; g \in \Re_b $.
It follows that
the linear relation (\ref{eq:linindept1a}) must be trivial, and therefore
$\Re$ is a linearly independent set of quadratic polynomials in $\textbf{k}\langle W \rangle$.
\end{enumerate}
\end{proof}
We claim that $\Re$ is a set of defining relations for $A\circ B$. We know that $A\circ B$ is a quadratic algebra, that is its ideal of relations is generated by homogeneous polynomials of degree $2$, see Corollary \ref{cor:SegreProductProperties}.

Consider the graded ideal $J =(\Re)$ of $\textbf{k}\langle W\rangle$. To show that  that $J=(\Re(A\circ B))$ it will be enowgh to verify that there is an isomorphism of vector spaces:
\[
(\Re)_2 \oplus (A\circ B)_2 = (\textbf{k}\langle W\rangle)_2 ,
\]
or equivalently
\begin{equation}
\label{eq:dimensions11}
\dim \Span_{\textbf{k}}\Re + \dim_{\textbf{k}}  (A\circ B)_2 = \dim_{\textbf{k}} (\textbf{k}\langle W\rangle)_2.
\end{equation}
We have shown that $\Re$ is linearly independent, so $\dim \Span_{\textbf{k}}\Re =|\Re|= \binom{mn}{2}+\binom{m}{2}\binom{n}{2}$.
We also know that $\dim_{\textbf{k}}  (A\circ B)_2 = \dim_{\textbf{k}} A_2  \dim_{\textbf{k}} B_2 = \binom{m+1}{2}\binom{n+1}{2}$. Then

\[
\dim Span_{\textbf{k}}\Re + \dim_{\textbf{k}}  (A\circ B)_2 = \binom{mn}{2}+\binom{m}{2}\binom{n}{2} + \binom{m+1}{2}\binom{n+1}{2} = m^2n^2= \dim_{\textbf{k}} (\textbf{k}\langle W\rangle)_2,
\]
as desired.
It follows that $\Re$ is a set of defining relations for the Segre product $A\circ B$.
\end{proof}

\section{Segre maps of Yang-Baxter algebras}
\label{Sec:SegreMaps}
In this section we introduce and investigate
non-commutative analogues of the Segre maps in the class of Yang-Baxter algebras of finite solutions.
Our main result is Theorem \ref{thm:segremap}. As a consequence Corollary \ref{cor:SegreProdNoetherian} shows that the Segre product $A\circ B$ of two Yang-Baxter algebras $A$ and $B$ is always left and right Noetherian.
The results agree with their classical analogues in the commutative case, \cite{Harris}.

We keep the conventions and notation from the previous sections.
As usual $(X, r_1)$ and $(Y, r_2)$ are disjoint solutions of finite orders $m$ and $n$, respectively,
 $A =\cA(\textbf{k},  X, r_1)$, and  $B =\cA(\textbf{k},  Y, r_2)$  are the corresponding YB-algebras.
We fix enumerations
\[
X = \{x_1, \cdots, x_m\},  \quad Y = \{y_1, \cdots, y_n\},
\]
as in Convention \ref{rmk:conventionpreliminary1} and consider the degree-lexicographic orders on the free monoids $\langle X\rangle$, and $\langle Y\rangle$ extending these enumerations.
$A\circ B$ is the Segre product of $A$ and $B$, its set of one-generators is
\[W=X\circ Y =\{w_{11}=x_1\circ y_1 < w_{12}= x_1\circ y_2 <\cdots <w_{1n}= x_1\circ y_n< w_{21}= x_2\circ y_1<\cdots w_{mn}=x_m\circ y_n \},\]
ordered lexicographically, and $(X\circ Y, r_{X\circ Y})$ is the solution isomorphic to the Cartesian product $(X\times Y, \rho_{X\times Y})$, see Proposition-Notation \ref{pronotation}.

\begin{defnotation}
\label{def:Z}
Let $Z=\{z_{11}, z_{12}, \cdots, z_{mn}\}$ be a set of order $mn$, disjoint with $X$ and $Y$. Define a map
\[r=r_Z: Z\times Z\longrightarrow Z\times Z
\]
induced canonically from the solution
$(X\circ Y, r_{X\circ Y})$:
\[
r(z_{jb},z_{ia})=(z_{i^{\prime}a^{\prime}}, z_{j^{\prime}b^{\prime}})\;\text{\emph{iff}}\; r_{X\circ Y}(x_j\circ y_b, x_i\circ y_a)=
(x_{i^{\prime}}\circ y_{a^{\prime}}, x_{j^{\prime}}\circ y_{b^{\prime}}).
\]
It is clear, that $(Z, r_Z)$ is a solution isomorphic to $(X\circ Y, r_{X\circ Y})$ (and isomorphic to the Cartesian product
$(X\times Y, r_{X\times Y}$)).
\end{defnotation}
We consider the degree-lexicographic order on the free monoid $\langle Z\rangle$ induced by the enumeration of $Z$
\[Z= \{z_{11}< z_{12}< \cdots <  z_{mn}\}.\]
\begin{rmk}
\label{remark}
Let  $\mathbb{A}_Z= \cA(\textbf{k},  Z, r_Z)$ be the  YB-algebra of  $(Z, r_Z)$.
Then $\mathbb{A}_Z =\textbf{k}\langle Z\rangle/(\Re(\mathbb{A}_Z))$, where the ideal of relations of $\mathbb{A}_Z$
is generated by the the set $\Re(\mathbb{A}_Z)$ consisting of
$\binom{mn}{2}$ quadratic binomial relations given explicitly in (\ref{eq:viprelationsZ1})  and  (\ref{eq:viprelationsZ2}) :
\begin{equation}
\label{eq:viprelationsZ1}
\begin{array}{c}
\varphi_{ji,ba}= z_{jb}z_{ia}- z_{i^{\prime}a^{\prime}}z_{j^{\prime}b^{\prime}},
1 \leq i,j\leq m, 1 \leq a,b\leq n,  \\
\text{where}\;r_Z(z_{jb}z_{ia}) = z_{i^{\prime}a^{\prime}}z_{j^{\prime}b^{\prime}}, \;\text{and}\;
j >i^{\prime},
 \text{or equivalently}, \;r_Z(z_{jb}z_{ia})< z_{jb}z_{ia}.
\end{array}
\end{equation}
Every relation $\varphi_{ji,ba}$ has leading monomial  $\LM (\varphi_{ji,ba})=z_{jb}z_{ia}$.

\begin{equation}
\label{eq:viprelationsZ2}
\begin{array}{c}
\varphi_{ij,ba}= z_{ib}z_{ja}- z_{i a^{\prime}}z_{jb^{\prime}}, \;
1 \leq i, j\leq m, 1 \leq a, b\leq n, \\
 \text{where}\; r_Z(z_{ib}z_{ja})=z_{i a^{\prime}}z_{jb^{\prime}}\; \text{and}\; b > a^{\prime}.
\end{array}
\end{equation}
Every relation $\varphi_{ij,ba}$ has leading monomial  $\LM (\varphi_{ij,ba})=  z_{ib}z_{ja}$.

There is a one-to one correspondence between the set of relations $\Re(\mathbb{A}_Z)$ and the set of nontrivial $r_{Z}$-orbits in
$Z\times Z.$
\end{rmk}

  Note that $A\circ B$ is a subalgebra of $A\otimes B$, so if $f=g$ holds in $A\circ B$, then it holds in $A\otimes B.$
\begin{lem}
    \label{lem:segremap}
In notation as above. Let  $(X, r_1)$ and $(Y, r_2)$  be solutions on the finite disjoint sets $X = \{x_1, \cdots, x_m\}$, and $Y = \{y_1, \cdots, y_n\}$, and
let $A =\cA(\textbf{k},  X, r_1)$, and  $B =\cA(\textbf{k},  Y, r_2)$ be the corresponding YB algebras.
 Let $(Z, r_Z)$ be the solution of order $mn$ from Definition-Notation \ref{def:Z}, and let $\mathbb{A}_Z= \cA(\textbf{k},  Z, r_Z)$ be its  YB-algebra.
 Then the assignment
 \[z_{11} \mapsto x_1\otimes y_1,  \; z_{12} \mapsto x_1\otimes y_2,\; \cdots, \; z_{mn} \mapsto x_m\otimes y_n \]
extents to an algebra homomorpgism $s_{m, n} : \mathbb{A}_Z \longrightarrow A\otimes_{\textbf{k}} B$.
    \end{lem}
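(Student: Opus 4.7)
The plan is to use the universal property of the presentation $\mathbb{A}_Z = \textbf{k}\langle Z\rangle/(\Re(\mathbb{A}_Z))$. The assignment $z_{ia}\mapsto x_i\otimes y_a$ already defines an algebra homomorphism from the free algebra $\textbf{k}\langle Z\rangle$ into $A\otimes_{\textbf{k}} B$, so the only thing to verify is that every defining relation in $\Re(\mathbb{A}_Z)$ is sent to zero. Since $\Re(\mathbb{A}_Z)$ is in bijection with the set of non-trivial $r_Z$-orbits in $Z\times Z$, this reduces to checking one identity per such orbit.

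Concretely, I would pick an arbitrary relation of $\mathbb{A}_Z$ in the form given by (\ref{eq:viprelationsZ1})–(\ref{eq:viprelationsZ2}), namely
\[
z_{jb}z_{ia}-z_{i'a'}z_{j'b'}, \qquad r_Z(z_{jb},z_{ia})=(z_{i'a'},z_{j'b'}),
\]
and apply the candidate map. Its image in $A\otimes B$ is
\[
(x_j\otimes y_b)(x_i\otimes y_a)-(x_{i'}\otimes y_{a'})(x_{j'}\otimes y_{b'})=x_jx_i\otimes y_by_a-x_{i'}x_{j'}\otimes y_{a'}y_{b'}.
\]
By Definition-Notation \ref{def:Z} the equality $r_Z(z_{jb},z_{ia})=(z_{i'a'},z_{j'b'})$ is equivalent to the conjunction $r_1(x_j,x_i)=(x_{i'},x_{j'})$ and $r_2(y_b,y_a)=(y_{a'},y_{b'})$. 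The first identity forces $x_jx_i=x_{i'}x_{j'}$ in $A$ (either as a defining relation of $A$, or trivially when $r_1$ fixes $(x_j,x_i)$), and the second forces $y_by_a=y_{a'}y_{b'}$ in $B$. Substituting these equalities shows that the image in $A\otimes B$ vanishes.

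Since this argument is uniform in the choice of non-trivial $r_Z$-orbit, it handles both families (\ref{eq:viprelationsZ1}) and (\ref{eq:viprelationsZ2}) simultaneously, so every generator of the ideal $(\Re(\mathbb{A}_Z))$ is annihilated. Hence the homomorphism $\textbf{k}\langle Z\rangle\to A\otimes B$ factors through $\mathbb{A}_Z$, yielding the desired map $s_{m,n}$. There is no real obstacle here: the statement is essentially a translation of the fact that the solution $(Z,r_Z)$ was \emph{defined} to be the image of the Cartesian product solution under the bijection $z_{ia}\leftrightarrow x_i\circ y_a$, so the relations of $\mathbb{A}_Z$ are built to respect the componentwise relations of $A$ and $B$.
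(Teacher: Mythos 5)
Your proposal is correct, and at the level of overall strategy it coincides with the paper's: both define the map on the free algebra $\textbf{k}\langle Z\rangle$ and then check that every generator of the ideal $(\Re(\mathbb{A}_Z))$ is annihilated. Where you differ is in how that vanishing is verified. The paper observes that $s_{m,n}(\varphi_{ji,ba})=f_{ji,ba}$ and $s_{m,n}(\varphi_{ij,ba})=f_{ij,ba}$ land in the set $\Re_a$ of defining relations of $A\circ B$ established in Theorem \ref{thm:rel_segre}, and hence vanish in $A\circ B\subset A\otimes B$; that earlier result in turn rests on Lemma \ref{lem:segreproductrel} and the $\sigma^{23}$-presentation of the Segre product from Remark \ref{rmk:relAoB1}. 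You instead compute directly in $A\otimes B$: the image of $z_{jb}z_{ia}-z_{i'a'}z_{j'b'}$ is $x_jx_i\otimes y_by_a-x_{i'}x_{j'}\otimes y_{a'}y_{b'}$, and since $r_Z$ is defined componentwise from $r_1$ and $r_2$, Corollary \ref{cor:orbits_and_fixedpoints}(3) gives $x_jx_i=x_{i'}x_{j'}$ in $A$ and $y_by_a=y_{a'}y_{b'}$ in $B$, so the difference is zero. Your verification is more elementary and self-contained (it never needs the Segre-product presentation, only the tensor-product multiplication and the fact that an $r$-orbit collapses to a single element of the YB-algebra), and it handles both relation families uniformly; the paper's version is shorter on the page only because it reuses machinery already built for Theorem \ref{thm:rel_segre}. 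Both are sound.
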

    \begin{proof}
Naturally we set
$s_{m,n} (z_{i_1a_1}\cdots z_{i_pa_p}): =(x_{i_1}\circ y_{a_1})\cdots   (x_{i_p}\circ y_{a_p})$, for all
words $z_{i_1a_1}\cdots z_{i_pa_p}\in \langle Z\rangle$ and then extend this
map linearly.
    Note that for each polynomial $\varphi_{ji, ba}\in \Re(\mathbb{A}_Z)$  given in (\ref{eq:viprelationsZ1})
one has
    \[s_{n,d}(\varphi_{ji, ba}) = f_{ji,ba}\in \Re_{a},\]
where the set $\Re_{a}$ is a  part of the relations of $A\circ B$, given in Theorem \ref{thm:rel_segre}.

We have shown that $f_{ji,ba}$ equals identically zero in $A\circ B=\bigoplus_{i \geq 0}  A_i\otimes_{\textbf{k}} B_i$, which is a subalgebra of $A\otimes B$
    and therefore $s_{n,d}(\varphi_{ji, ba}) =f_{ji,ba}= 0$ in $A\otimes B$.

Similarly  for each  $\varphi_{ij, ba}$ given in  (\ref{eq:viprelationsZ2}) one has
\[s_{n,d}(\varphi_{ij, ba}) = f_{ij,ba}\in \Re_{a},\]
thus
\[
s_{n,d}(\varphi_{ij, ba}) =0\; \text{holds in}\; A\circ B \subset A\otimes B.
\]
We have shown
 that the map $s_{m,n}$ agrees
   with the relations of the algebra $\mathbb{A}_Z$.
It follows that the map $s_{m, n} : \mathbb{A}_Z \longrightarrow A\otimes_{\textbf{k}} B$
is a well-defined homomorphism of algebras.
\end{proof}

\begin{dfn}
\label{def:segremap}
We call the map $s_{m, n} : \mathbb{A}_Z \longrightarrow A\otimes_{\textbf{k}} B$ from Lemma \ref{lem:segremap} \emph{the $(m,n)$-Segre map}.
\end{dfn}

\begin{thm}
\label{thm:segremap}
In notation as above.
Let  $(X, r_1)$ and $(Y, r_2)$  be solutions on the finite disjoint sets $X = \{x_1, \cdots, x_m\}$, and $Y = \{y_1, \cdots, y_n\}$, let $N=mn$, and let $A$, and  $B$ be the corresponding YB algebras.
 Let  $(Z,r_Z)$ be the solution on the set $Z = \{z_{11}, \cdots, z_{mn}\}$ defined in Definition-Notation \ref{def:Z}, and let  $\mathbb{A}_Z= \cA(\textbf{k},  Z, r_Z)$ be its YB-algebra.
Let $s_{m, n} : \mathbb{A}_Z \longrightarrow A\otimes_{\textbf{k}} B$ be the Segre map
 extending
 the assignment
 \[z_{11} \mapsto x_1\circ y_1,  \; z_{12} \mapsto x_1\circ y_2, \;\cdots, \; z_{mn} \mapsto x_m\circ y_n. \]
 \begin{enumerate}
 \item The image of the Segre map $s_{m, n}$ is the Segre product $A\circ B$. Moreover,
$s_{m, n} : \mathbb{A}_Z \longrightarrow A\circ B$  is a homomorphism of graded algebras.

 \item The kernel $\mathfrak{K}=\ker (s_{m,n})$ of the Segre map is generated by the set $\Re_s$  of $\binom{m}{2}\binom{n}{2}$
  linearly independent quadratic
binomials described below
\begin{equation}
  \label{eq:gammaij}
  \begin{array}{lll}
\Re_s= &\{\gamma_{ij,ba} = z_{ib} z_{ja} -   z_{ia^{\prime}}z_{jb^{\prime}},& 1\leq i,j\leq m, 1 \leq a,b\leq n \mid  \\
            && r_1( x_ix_j) > x_ix_j , \;\text{and}\; \; r_2(y_by_a) = y_{a^{\prime}}y_{b^{\prime}}
\;  \text{with}\;\;b >a^{\prime}\}.
\end{array}
\end{equation}
  \end{enumerate}
\end{thm}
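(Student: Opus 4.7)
The plan is to prove part (1) as an immediate consequence of Lemma \ref{lem:segremap} combined with the fact that $A\circ B$ is one-generated, and then to deduce part (2) by comparing the presentation of $\mathbb{A}_Z/(\Re_s)$ with the presentation of $A\circ B$ provided by Theorem \ref{thm:rel_segre}.

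For part (1), I would first observe that $s_{m,n}$ is graded of degree zero: a length-$d$ monomial $z_{i_1a_1}\cdots z_{i_da_d}$ of $\mathbb{A}_Z$ is sent to $(x_{i_1}\circ y_{a_1})\cdots(x_{i_d}\circ y_{a_d})\in A_d\otimes B_d=(A\circ B)_d$, so in particular $\im(s_{m,n})\subseteq A\circ B$. For surjectivity, Corollary \ref{cor:SegreProductProperties} gives that $A\circ B$ is one-generated with $(A\circ B)_1=A_1\otimes B_1$ whose natural basis $W=X\circ Y$ is precisely $\{s_{m,n}(z_{ia})\}$; hence the image contains all degree-one generators of $A\circ B$ and therefore equals $A\circ B$. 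This yields the graded algebra surjection asserted in (1).

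For part (2), the inclusion $(\Re_s)\subseteq\mathfrak{K}=\ker(s_{m,n})$ is immediate: each $\gamma_{ij,ba}$ is mapped to the polynomial $g_{ij,ba}\in\Re_b$ of Theorem \ref{thm:rel_segre}(2), which vanishes in $A\circ B$. For the reverse inclusion I would use the tautological bijection $z_{ia}\leftrightarrow w_{ia}$ to identify $\textbf{k}\langle Z\rangle$ with $\textbf{k}\langle W\rangle$. Under this identification the defining relations $\Re(\mathbb{A}_Z)$ of $\mathbb{A}_Z$ listed in (\ref{eq:viprelationsZ1})--(\ref{eq:viprelationsZ2}) correspond bijectively to the set $\Re_a$ of Theorem \ref{thm:rel_segre} (in the $W$-form recorded in Remark \ref{rmk:relinW}), while $\Re_s$ corresponds to $\Re_b$. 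Consequently,
\[
\mathbb{A}_Z/(\Re_s) \simeq \textbf{k}\langle Z\rangle/(\Re(\mathbb{A}_Z)\cup\Re_s) \simeq \textbf{k}\langle W\rangle/(\Re_a\cup\Re_b) = A\circ B.
\]
The Segre map $s_{m,n}$ then factors through $\mathbb{A}_Z/(\Re_s)$ as a surjective graded morphism onto $A\circ B$, and since domain and codomain have identical Hilbert series by the isomorphism above, this factored map is a graded isomorphism. Hence $\mathfrak{K}=(\Re_s)$.

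The remaining assertions are the count $|\Re_s|=\binom{m}{2}\binom{n}{2}$ and the linear independence of $\Re_s$. The cardinality reproduces the count given for $\Re_b$ in the proof of Theorem \ref{thm:rel_segre} (nontrivial $r_1$-orbits on $X^2$ paired with nontrivial $r_2$-orbits on $Y^2$), and linear independence follows exactly as in Lemma \ref{lem:independence}(2) from the fact that the leading monomials $\LM(\gamma_{ij,ba})=z_{ib}z_{ja}$ are pairwise distinct. The principal obstacle, conceptually, is the inclusion $\mathfrak{K}\subseteq(\Re_s)$; I expect the cleanest route to be the presentation-comparison above, which leverages the already-established Theorem \ref{thm:rel_segre} rather than attempting a direct normal-form or Gr\"obner-basis argument inside $\mathbb{A}_Z$.
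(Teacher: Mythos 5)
Your argument is correct. Part (1), the inclusion $(\Re_s)\subseteq\mathfrak{K}$, the count $|\Re_s|=\binom{m}{2}\binom{n}{2}$, and the linear independence of $\Re_s$ all run essentially as in the paper; where you genuinely diverge is the reverse inclusion $\mathfrak{K}\subseteq(\Re_s)$. The paper works degree by degree: from $\mathbb{A}_Z/\mathfrak{K}\simeq A\circ B$ it computes $\dim\mathfrak{K}_2=\binom{mn+1}{2}-\binom{m+1}{2}\binom{n+1}{2}=\binom{m}{2}\binom{n}{2}=|\Re_s|$, concludes that the linearly independent set $\Re_s$ is a basis of $\mathfrak{K}_2$, and then invokes the fact that $\mathfrak{K}$ is generated in degree two (because $A\circ B$ is quadratic, Corollary \ref{cor:SegreProductProperties}) to get $\mathfrak{K}=(\mathfrak{K}_2)=(\Re_s)$. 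You instead compare presentations: under $z_{ia}\leftrightarrow w_{ia}$ the sets $\Re(\mathbb{A}_Z)$ and $\Re_s$ correspond to $\Re_a$ and $\Re_b$, so $\mathbb{A}_Z/(\Re_s)\simeq\textbf{k}\langle W\rangle/(\Re_a\cup\Re_b)=A\circ B$ by Theorem \ref{thm:rel_segre}, and the induced surjection $\mathbb{A}_Z/(\Re_s)\twoheadrightarrow A\circ B$ is forced to be an isomorphism by equality of Hilbert series. Both arguments are sound. Yours leans on the full strength of Theorem \ref{thm:rel_segre} (that $\Re_a\cup\Re_b$ is a \emph{complete} set of defining relations), whereas the paper needs only the quadraticity of $A\circ B$ and $\dim(A\circ B)_2$; in exchange, your route sidesteps the paper's tersely justified claim that the ideal $\mathfrak{K}$ is generated in degree two. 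You could even drop the Hilbert-series step: the lift of $s_{m,n}$ to $\textbf{k}\langle Z\rangle$ is exactly the identification with $\textbf{k}\langle W\rangle$ followed by the quotient onto $A\circ B$, so its kernel is $(\Re_a\cup\Re_b)$ on the nose and $\mathfrak{K}$, being the image of that ideal in $\mathbb{A}_Z=\textbf{k}\langle Z\rangle/(\Re_a)$, equals $(\Re_s)$ directly.
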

\begin{proof}
(1)   Clearly, the image of the Segre map $s_{m, n}$ is the subalgebra of $A\otimes B$ generated by the set
$s_{m,n} (Z) = X\circ Y,$ where
\[
W = X\circ Y=  \{w_{11}= x_1\circ y_1< w_{12}= x_1\circ y_2 <\cdots <w_{mn}= x_m\circ y_n\}.
\]
By Theorem \ref{thm:rel_segre} the set  $W = X\circ Y$ generates the algebra $A\circ B$, hence
$s_{m,n}(\mathbb{A}_Z) = A\circ B$.

(2)  Note first that $\Re_s \subset \mathfrak{K}=\ker (s_{m,n})$. Indeed, it follows from the description given in (\ref{eq:gammaij}) that
if $\gamma_{ij,ba} = z_{ib} z_{ja} -   z_{ia^{\prime}}z_{jb^{\prime}}\in \Re_s$ then the element $\gamma_{ij,ba}$ is different from zero
in $\mathbb{A}_Z$ and direct computation shows that
\[s_{m,n}(\gamma_{ij,ba})= s_{m,n}( z_{ib} z_{ja} -   z_{ia^{\prime}}z_{jb^{\prime}}) =  (x_i\circ y_b)(x_j\circ y_a)-(x_i\circ y_{a^{\prime}})(x_j\circ y_{b^{\prime}})= g_{ij,ba} \in \Re_b,\]
where $\Re_b$ is the subset of relations  of $A\circ B$ described in  (\ref{eq:viprelations3}). Therefore, $s_{m,n}(\gamma_{ij,ba})= 0$ in $A\circ B,$
for all elements of $\Re_s$, and
\[
\Re_s \subset \mathfrak{K}=\ker (s_{m,n}).\]
 Moreover, there is a one-to-one correspondence between the sets $\Re_s$ and $\Re_b,$ so
\begin{equation}
  \label{eq:Res}
|\Re_s|= |\Re_b|= \binom{m}{2}\binom{n}{2}.
\end{equation}

We claim that $\Re_s$ is a  minimal set of generators of the kernel $\mathfrak{K}$.
The set $\Re_s$ is linearly independent, since $s_{m,n}(\Re_s)= \Re_b$, and $\Re_b$ is a linearly independent set in $A\otimes B$, by Lemma
\ref{lem:independence}.

By the First Isomorphism Theorem $\mathbb{A}_Z/\mathfrak{K} \simeq A\circ B$ and
$(\mathbb{A}_Z/\mathfrak{K})_2 \simeq (A\circ B)_2.$
Hence
\[
\dim (\mathbb{A}_Z)_2= \dim(\mathfrak{K}_2)+\dim(A\circ B)_2,
\]
and
\[ \dim (\mathfrak{K}_2)= \dim(\mathbb{A}_Z)_2-\dim (A\circ B)_2. \]
But $\mathbb{A}_Z$ is the YB-algebra of the solution $(Z, r_Z)$ of order $mn$, so
 $\dim(\mathbb{A}_Z)_2= \binom{mn+1}{2}$. We also know that $\dim (A\circ B)_2= \dim A_2\dim B_2 = \binom{m+1}{2} \binom{n+1}{2}$.
It follows that
\begin{equation}
  \label{eq:dimK_2}
\dim (\mathfrak{K})_2 = \binom{mn+1}{2}-\binom{m+1}{2} \binom{n+1}{2} =\binom{m}{2}\binom{n}{2},
\end{equation}
Therefore by ( \ref{eq:Res})
\[ \dim (\mathfrak{K})_2 =|\Re_s|,\]
and since $\Re_s$ is linearly independent subset of  $\mathfrak{K}_2$ it follows that $\Re_s$ is a basis of the graded component $\mathfrak{K}_2$ of
the ideal $\mathfrak{K}$.
Therefore $\mathfrak{K}_2 = \textbf{k}\Re_s$. The ideal $\mathfrak{K}$ is generated by homogeneous polynomials of degree $2$,
hence
\[\mathfrak{K} =(\mathfrak{K}_2)= (\Re_s).\]
We have shown that $\Re_s$ is a minimal set of generators for the kernel $\mathfrak{K}$.
\end{proof}

\begin{cor}
\label{cor:SegreProdNoetherian}
In notation and assumption as above. Let $A=\cA(\textbf{k}, X,r_1),$ and $B=\cA(\textbf{k}, Y,r_2),$ be the YB algebras of the finite solutions $(X,r_1)$
and $(Y, r_2)$ of order $m$ and $n$, respectively.
Then the Segre product $A\circ B$ is a left and right Noetherian algebra. Moreover, $A\circ B$ has polynomial growth.
\end{cor}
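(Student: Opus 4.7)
The plan is to exploit the surjective graded algebra homomorphism $s_{m,n}\colon \mathbb{A}_Z \longrightarrow A\circ B$ furnished by Theorem \ref{thm:segremap}, which realizes $A\circ B$ as a graded quotient of the Yang-Baxter algebra $\mathbb{A}_Z=\cA(\textbf{k},Z,r_Z)$ of a finite nondegenerate symmetric set $(Z,r_Z)$ of order $mn$. Both conclusions of the corollary will then follow by transferring known properties of $\mathbb{A}_Z$ across the quotient map.

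First I would invoke Facts \ref{fact2}(3) applied to the finite solution $(Z,r_Z)$: the algebra $\mathbb{A}_Z$ is both left and right Noetherian. By Theorem \ref{thm:segremap}(1) together with the First Isomorphism Theorem, there is an isomorphism of graded algebras
\[
A\circ B \;\cong\; \mathbb{A}_Z / \mathfrak{K},
\]
where $\mathfrak{K}=\ker(s_{m,n})$. Since left (respectively, right) ideals of $\mathbb{A}_Z/\mathfrak{K}$ correspond bijectively with left (respectively, right) ideals of $\mathbb{A}_Z$ containing $\mathfrak{K}$, the ascending chain condition is inherited by the quotient. Hence $A\circ B$ is left and right Noetherian.

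For the polynomial growth statement the quickest route is to compute the Hilbert function directly from equation (\ref{eq:hilbfunction}) and Corollary \ref{cor:dimAd} applied separately to the Yang-Baxter algebras $A$ and $B$:
\[
h_{A\circ B}(d)\;=\;h_A(d)\cdot h_B(d)\;=\;\binom{m+d-1}{d}\binom{n+d-1}{d},
\]
which is a polynomial in $d$ of degree $m+n-2$. Thus $A\circ B$ has polynomial growth (of GK-dimension $m+n-1$). Alternatively, one may simply observe that $h_{A\circ B}(d)\le h_{\mathbb{A}_Z}(d)=\binom{mn+d-1}{d}$, since $A\circ B$ is a quotient of $\mathbb{A}_Z$, and polynomial growth passes to graded quotients of connected graded algebras.

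No serious obstacle is anticipated here; the work was done in Theorem \ref{thm:segremap}, which supplies the crucial surjection, and in Facts \ref{fact2}, which supplies the Noetherian property on the source. The corollary is essentially a formal consequence of combining these two ingredients with the elementary Hilbert-function identity for Segre products.
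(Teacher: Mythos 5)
Your proof is correct and follows essentially the same route as the paper: both realize $A\circ B$ as the homomorphic image of $\mathbb{A}_Z$ under the Segre map $s_{m,n}$ from Theorem \ref{thm:segremap} and transfer Noetherianity and polynomial growth from Facts \ref{fact2}(3). Your explicit computation $h_{A\circ B}(d)=\binom{m+d-1}{d}\binom{n+d-1}{d}$ is a small welcome refinement over the paper's appeal to the growth of $\mathbb{A}_Z$, but the argument is the same in substance.
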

\begin{proof} It follows from Theorem  \ref{thm:segremap}  that $A\circ B= s_{m, n}  (\mathbb{A}_Z)$,  the image of the Segre homomorphism
$s_{m, n} : \mathbb{A}_Z \longrightarrow A\otimes_{\textbf{k}} B$ , where $\mathbb{A}_Z$ is the YB-algebra of the solution $(Z, r_Z)$ of order $mn$.
 By \cite[Theorem 1.4]{GIVB} the algebra $\mathbb{A}_Z$  is left and right Noetherian, and therefore its homomorphic image $A\circ B$ is left and right Noetherian.  Moreover, $\mathbb{A}_Z$ has polynomial growth of degree $mn$ and therefore $A\circ B$ has polynomial growth.
\end{proof}

\begin{que}
\label{que} Let $A=\cA(\textbf{k}(X,r_1),$  and $B=\cA(\textbf{k}(Y,r_2),$ be the YB algebras of the finite solutions $(X,r_1)$
and $(Y, r_2)$. Is it true that the Segre product $A \circ B$ is a domain?
\end{que}
A modified v1ersion for the the particular cases of square-free solutions is still open.
\begin{que}
\label{que1} Let $A=\cA(\textbf{k}(X,r_1),$  and $B=\cA(\textbf{k}(Y,r_2),$ be the YB algebras of the finite square-free solutions $(X,r_1)$
and $(Y, r_2)$. Is it true that the Segre product $A \circ B$ is a domain?
\end{que}
We expect that due to the very special properties of $A$ and $B$, and the specific relations of $A \circ B$ the Segre product $A \circ B$ is also a domain.

We know that $A$ and $B$  are Noetherian domains, and $A\circ B$ is a subalgebra of the tensor product $A\otimes B$. However, it is shown in \cite{rowen} that the tensor product $D_1\otimes_F D_2$ of two division algebras over an algebraically closed field contained in their centers may not be a domain.

\section{Segre products and Segre maps for the class of square-free solutions}
\label{sec:square-free}
In this section $(X,r_1)$ and $(Y, r_2)$ are fixed disjoint square-free solutions of orders $|X|= m$ and $|Y|= n.$
Let
 $A =\cA(\textbf{k},  X, r_1)$, and  $B =\cA(\textbf{k},  Y, r_2)$  be the corresponding YB-algebras.
As in Convention \ref{rmk:conventionpreliminary1} we enumerate
$X = \{x_1, \cdots, x_m\}$, and $Y = \{y_1, \cdots, y_n\}$,
so that $A$ and $B$ are binomial skew-polynomial algebras with respect to these enumerations, see Definition \ref{binomialringdef}. In particular, $A$ is a PBW algebra with PBW generators $x_1, \cdots, x_m$ and $B$ is a PBW algebra with PBW generators $y_1, \cdots, y_n$.
The following result collects various algebraic properties of the Segre product $A\circ B.$

\begin{thm}
\label{thm:segre_square-free}
Suppose $(X,r_1)$ and $(Y, r_2)$ are disjoint square-free solutions, where
\[X = \{x_1, \cdots, x_m\}, \quad \text{and} \quad Y = \{y_1, \cdots, y_n\}
\]
are enumerated so that the YB-algebras $A =\cA(\textbf{k},  X, r_1)$, and  $B =\cA(\textbf{k},  Y, r_2)$  are binomial skew polynomial rings with respect to these enumerations.

 Then the Segre product $A\circ B$ satisfies the following conditions.
 \begin{enumerate}
 \item
 \label{thm:segre_square-free1}
 $A\circ B$ is a PBW algebra with a set of $mn$ PBW generators
\[W=X\circ Y =\{w_{11}=x_1\circ y_1, \; w_{12}= x_1\circ x_2,\; \cdots,\; w_{1n}=x_1\circ y_n,\; \cdots,\; w_{mn}=x_m\circ x_n \},\]
ordered lexicographically.
and a standard finite presentation
\[
A\circ B \simeq \textbf{k} \langle w_{11}, \cdots, w_{mn}\rangle/(\Re) ,
\]
where the set of relations $\Re$ is a Gr\"{o}bner basis
of the ideal $I = (\Re)$ and consists of $\binom{mn}{2}+\binom{m}{2}\binom{n}{2}$  square-free quadratic polynomials. The set $\Re$ splits as a disjoint union
$\Re= \Re_a \cup \Re_b$ of the sets $\Re_a$ and $\Re_b$ described below.
\begin{enumerate}
\item

The set $\Re_a$ consists of $\binom{mn}{2}$ relations given explicitly in (\ref{eq:viprelations1a6})  and  (\ref{eq:viprelations2a6}) :
\begin{equation}
\label{eq:viprelations1a6}
\begin{array}{c}
f_{ji,ba}= w_{jb}w_{ia}-w_{i^{\prime}a^{\prime}}w_{j^{\prime}b^{\prime}}, 1\leq i<j \leq m, 1 \leq a, b \leq n,
\; \text{where} \\
r_1(x_jx_i)  = x_{i^{\prime}}x_{j^{\prime}}, \;j > i^{\prime}\; \text{and} \;r_2(y_by_a)=y_{a^{\prime}}y_{b^{\prime}};\\
w_{jb}>w_{i^{\prime}a^{\prime}}, \;\; w_{jb}>w_{ia}, \;\; w_{i^{\prime}a^{\prime}}< w_{j^{\prime}b^{\prime}}.
\end{array}
\end{equation}
Every relation $f_{ji,ba}$ has leading monomial  $\LM (f_{ji,ba})= w_{jb}w_{ia}$.

\begin{equation}
\label{eq:viprelations2a6}
\begin{array}{c}
f_{ii,ba}= w_{ib}w_{ia}- w_{ia^{\prime}} w_{ib^{\prime}}, 1\leq i \leq m, 1 \leq a<b\leq n,\;  \text{where}\\
r_2(y_by_a) = y_{a^{\prime}}y_{b^{\prime}}\; \text{with}\;\; b > a^{\prime}\\
w_{ib}> w_{ia^{\prime}},\;\; w_{ib}> w_{ia},\;\; w_{ia^{\prime}}< w_{ib^{\prime}}.
\end{array}
\end{equation}
Every relation $f_{ii,ba}$ has leading monomial  $\LM (f_{ii,ba})= w_{ib}w_{ia}$.

\item
The set $\Re_b$ consists of $\binom{m}{2}\binom{n}{2}$ relations given explicitly in (\ref{eq:viprelations3b6})
\begin{equation}
\label{eq:viprelations3b6}
\begin{array}{c}
g_{ij, ba}= w_{ib}w_{ja}- w_{ia^{\prime}} w_{jb^{\prime}},   1\leq i<j\leq m, \; 1 \leq a<b\leq n, \\
\text{where}\; r_2(y_by_a) = y_{a^{\prime}}y_{b^{\prime}}\;\;
 \text{with}\;\;b >a^{\prime}.
\end{array}
\end{equation}
One has $\LM (g_{ij,ba})= w_{ib}w_{ja}$.
\end{enumerate}
\item
\label{thm:segre_square-free2}
$A\circ B$ is a Koszul algebra.
\item
\label{thm:segre_square-free3}
$A\circ B$ is left and right Noetherian.
\item
\label{thm:segre_square-free4}
The algebra $A\circ B$ has polynomial growth and infinite global dimension.
\end{enumerate}
\end{thm}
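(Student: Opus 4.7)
My approach is to derive the PBW-Gr\"obner basis claim (\ref{thm:segre_square-free1}) from Theorem \ref{thm:rel_segre} by specializing to the square-free setting and applying Bergman's Diamond Lemma as formulated in Remark \ref{rmk:diamondlemma}; the remaining items (\ref{thm:segre_square-free2})--(\ref{thm:segre_square-free4}) then follow from standard facts together with the earlier Corollary \ref{cor:SegreProdNoetherian}.

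For (\ref{thm:segre_square-free1}), the first step is to rewrite the three families $\Re_{a1},\Re_{a2},\Re_b$ from Theorem \ref{thm:rel_segre} using the fact that in a square-free solution the $r_1$-fixed points in $X^2$ (resp.\ the $r_2$-fixed points in $Y^2$) are exactly the diagonal, by Corollary \ref{cor:orbits_and_fixedpoints}. This immediately converts the condition ``$x_ix_j$ is a fixed point'' into ``$i=j$'' and gives the explicit forms (\ref{eq:viprelations1a6}), (\ref{eq:viprelations2a6}) and (\ref{eq:viprelations3b6}). That each binomial is square-free is a short per-case check: within every relation the two factors of each monomial differ in at least one index, because the binomial skew polynomial shape of the relations of $A$ and $B$ forces $i'<j'$ on the right-hand sides of $\Re_1$ and $a'<b'$ on those of $\Re_2$.

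The heart of the argument is the Gr\"obner basis claim. By Remark \ref{rmk:diamondlemma} it suffices to verify that the number of normal monomials modulo $\Re$ in each degree $d$ equals $\dim(A\circ B)_d = \binom{m+d-1}{d}\binom{n+d-1}{d}$ (Corollary \ref{cor:dimAd}). A direct inspection of the leading monomials from the three families shows that a length-two word $w_{i_1j_1}w_{i_2j_2}$ lies in $\LM(\Re)$ if and only if the pair $(i_1,j_1)$ is \emph{not} componentwise $\leq (i_2,j_2)$, since the case $i_1>i_2$ is supplied by $\Re_{a1}$, the case $i_1=i_2$ and $j_1>j_2$ by $\Re_{a2}$, and the case $i_1<i_2$ and $j_1>j_2$ by $\Re_b$. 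Consequently a word $w_{i_1j_1}\cdots w_{i_dj_d}$ is normal modulo $\Re$ exactly when
\[
i_1 \leq i_2 \leq \cdots \leq i_d \quad \text{and} \quad j_1 \leq j_2 \leq \cdots \leq j_d.
\]
These normal monomials of degree $d$ are in natural bijection with pairs of weakly increasing sequences in $\{1,\dots,m\}$ and $\{1,\dots,n\}$, of which there are exactly $\binom{m+d-1}{d}\binom{n+d-1}{d}$. This matches $\dim(A\circ B)_d$, so Bergman's Diamond Lemma yields that $\Re$ is a Gr\"obner basis and the images of the normal monomials form a PBW basis of $A\circ B$ on the lexicographic generators $W$.

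Item (\ref{thm:segre_square-free2}) is immediate because every PBW algebra is Koszul \cite{priddy, PoPo}; item (\ref{thm:segre_square-free3}) is exactly Corollary \ref{cor:SegreProdNoetherian}. For (\ref{thm:segre_square-free4}), the Hilbert series $H_{A\circ B}(t)=\sum_d \binom{m+d-1}{d}\binom{n+d-1}{d}t^d$ realizes $\dim(A\circ B)_d$ as a polynomial of degree $m+n-2$ in $d$, giving polynomial growth. For the global dimension I would invoke Koszul duality: for a Koszul algebra $C$ one has $H_C(t)\,H_{C^!}(-t)=1$, so $\gldim C$ is finite if and only if $1/H_C(t)$ is a polynomial. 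Writing $H_{A\circ B}(t)=Q(t)/(1-t)^{m+n-1}$ with $Q(0)=1$ and $Q(1)=\binom{m+n-2}{m-1}>0$, the reciprocal is a polynomial only when $Q$ is a nonzero constant, and comparing coefficients of $t$ forces $mn=m+n-1$, i.e.\ $m=1$ or $n=1$; hence under the standing assumption $m,n\geq 2$ the Segre product has infinite global dimension. The main technical obstacle is assembling the three families of leading monomials so as to obtain the clean ``componentwise $\leq$'' description of normal words; once that description is in place the dimension count is transparent and every remaining step is a direct invocation of earlier results.
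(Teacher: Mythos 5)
Your proposal is correct, and for the main item it follows essentially the paper's route: the same specialization of Theorem \ref{thm:rel_segre} via the fact that square-freeness makes the fixed points the diagonal, the same characterization of $\LM(\Re)$ leading to the normal words being exactly those with both index sequences weakly increasing, and the same comparison of $|\Ncal(\Re)_d|$ with $\dim(A\circ B)_d=\binom{m+d-1}{d}\binom{n+d-1}{d}$. Two differences are worth recording. First, the paper performs the count only in degree $3$, which suffices because all obstructions for quadratic leading monomials live in degree $3$; you verify the equality $|\Ncal(\Re)_d|=\dim(A\circ B)_d$ in every degree, which is slightly longer but equally valid by Remark \ref{rmk:diamondlemma} and arguably more self-contained (the paper also quotes \cite[Chap 4.4, Proposition 4.2]{PoPo} for the PBW property up front, which your argument makes redundant). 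Second, for the infinite global dimension in item (\ref{thm:segre_square-free4}) the paper cites \cite[Theorem 1.1]{GI12} --- a PBW algebra with $mn$ generators, polynomial growth and finite global dimension must have exactly $\binom{mn}{2}$ relations, whereas $A\circ B$ has $\binom{mn}{2}+\binom{m}{2}\binom{n}{2}$ --- while you argue via Koszul duality and the Hilbert series $H_{A\circ B}(t)=Q(t)/(1-t)^{m+n-1}$. Your route is a genuinely different and correct alternative that avoids the external relation-count theorem; note, though, that both arguments (yours explicitly, the paper's implicitly) require $m,n\geq 2$: if $m=1$ or $n=1$ then $\binom{m}{2}\binom{n}{2}=0$ and $A\circ B$ is isomorphic to $B$ or $A$, which has finite global dimension, so the clause ``infinite global dimension'' silently presupposes $m,n\geq 2$ and your flagging of this hypothesis is a small improvement in precision.
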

\begin{proof}
 (\ref{thm:segre_square-free1}). It follows from \cite[Chap 4.4, Proposition 4.2]{PoPo} that the Segre product $A\circ B$ is a PBW algebra with a set of PBW one-generators the set $W=X\circ Y$, ordered lexicographically. The shape of the defining relations follows from Theorem   \ref{thm:rel_segre},  and from the relations of the binomial skew-polynomial rings $A$ and $B$ encoding  the properties of $r_1$ and $r_2$. To show that $\Re$ is a Gr\"{o}bner basis of the ideal $I =(\Re)$
 it will be enough to check that
 \[\Ncal(\Re)_3= \Ncal(I)_3. \]
In general,  $ \Ncal(I)_3 \subseteq \Ncal(\Re)_3$, so we have to show that
 \[|\Ncal(\Re)_3|=|\Ncal(I)_3|=\dim (A\circ B)_3 = \binom{m+2}{3}\binom{n+2}{3}. \]
A monomial $u \in \langle W \rangle$ of length $3$ is normal modulo $\Re$ \emph{iff} it is normal modulo $\overline{\Re}$, that is
$u$ is not divisible by any of the leading monomials $\LM(f), f\in \Re$.
Note that
\[w_{ia}w_{jb}w_{kc}\in \Ncal(\Re)_3 \Longleftrightarrow w_{ia}w_{jb}\in \Ncal(\Re)_2 \;\text{and}\;w_{jb}w_{kc}\in \Ncal(\Re)_2 \]
It follows from the shape of the leading monomials $\LM(f), f\in \Re$ that  $w_{ia}w_{jb}\in \Ncal(\Re)_2$ if and only if
$1 \leq i\leq j\leq m$ and $1 \leq a\leq b\leq n.$
Therefore
\[w_{ia}w_{jb}w_{kc}\in \Ncal(\Re)_3 \Longleftrightarrow  1\leq i \leq j \leq k \leq m\; \text{and}\;1\leq a\leq b \leq c\leq n.\]
In other words
\[\Ncal(\Re)_3 =\{w_{ia}w_{jb}w_{kc} \mid 1 \leq i \leq j \leq k\leq m, \; 1 \leq a \leq b \leq k\leq m \}.\]
It follows that $|\Ncal(\Re)_3| =  \binom{m+2}{3}\binom{n+2}{3}= \dim (A\circ B)_3,$ as desired.
Therefore the set of defining relations $\Re $ is a Gr\"{o}bner basis.

 (\ref{thm:segre_square-free2}).
 The Kosulity of $A\circ B$ follows from Corollary
\ref{cor:SegreProductProperties}. But it follows also from the fact that every PBW algebra is Koszul, see \cite{priddy}.

 (\ref{thm:segre_square-free3}).  Corollary \ref{cor:SegreProdNoetherian} implies that $A\circ B$ is left and right Noetherian.

 (\ref{thm:segre_square-free4}).  By Corollary \ref{cor:SegreProdNoetherian} again, $A\circ B$ has polynomial growth.
It follows from  Theorem 1.1, \cite{GI12} that if
 a graded PBW algebra has $mn$ one-generators, polynomial growth and finite global dimension, then the number of its defining relation must be $\binom{mn}{2}$.  We have shown in part (1) that the algebra $A\circ B$ is a quadratic PBW algebra with $mn$ PBW generators, and $\binom{mn}{2}+\binom{m}{2}\binom{n}{2}$ defining relations, therefore  $A\circ B$ has infinite global dimension.
\end{proof}
As we mentioned before we do not know if $A\circ B$ is a domain even in the square-free case, see
Question \ref{que1}.

Let $(X\circ Y, r_{X\circ Y})$ be the solution on the set $X\circ Y$,
 defined
in Proposition-Notation \ref{pronotation}. Then $(X\circ Y, r_{X\circ Y})$ is a square-free solution.

Consider now the solution $(Z, r_Z)$ on the set $Z=\{z_{11}, z_{12}, \cdots, z_{mn}\}$ defined in Definition-Notation \ref{def:Z}. By construction $(Z, r_Z)$ is isomorphic to the solution $(X\circ Y, r_{X\circ Y})$, and therefore it is a square-free solution of order $mn$.

\begin{pro}
\label{pro:SegreSquarefree}
Let  $\mathbb{A}_Z= \cA(\textbf{k},  Z, r_Z)$ be the  YB-algebra of  $(Z, r_Z)$.
Then $\mathbb{A}_Z$ is a binomial skew-polynomial ring
with a standard finite presentation
$\mathbb{A}_Z =\textbf{k}\langle z_{11}, z_{12}, \cdots, z_{mn}\rangle /(\Re(\mathbb{A}_Z))$, where
the set of defining relations $\Re(\mathbb{A}_Z)$ consists
of
$\binom{mn}{2}$ binomial relations given explicitly in (\ref{eq:viprelationsZ1a6})  and  (\ref{eq:viprelationsZ26}).
\begin{equation}
\label{eq:viprelationsZ1a6}
\begin{array}{c}
\varphi_{ji,ba}= z_{jb}z_{ia}- z_{i^{\prime}a^{\prime}}z_{j^{\prime}b^{\prime}},
1 \leq i<j\leq m, 1 \leq a,b\leq n, \; \text{where} \\
r_Z(z_{jb}z_{ia}) = z_{i^{\prime}a^{\prime}}z_{j^{\prime}b^{\prime}}, \;\text{and}\;
z_{jb}>z_{ia},\; z_{i^{\prime}a^{\prime}}< z_{j^{\prime}b^{\prime}},\; z_{jb}> z_{i^{\prime}a^{\prime}}.
\end{array}
\end{equation}
Every relation $\varphi_{ji,ba}$ has leading monomial  $\LM (\varphi_{ji,ba})=z_{jb}z_{ia}$.

\begin{equation}
\label{eq:viprelationsZ26}
\begin{array}{c}
\varphi_{ii,ba}= z_{ib}z_{ia}- z_{i a^{\prime}}z_{ib^{\prime}}, \;
1 \leq i\leq m, 1 \leq a < b\leq n, \; \text{where}\\
 r_Z(z_{ib}z_{ia})=z_{i a^{\prime}}z_{ib^{\prime}}\; \text{and}\;
 z_{ib}> z_{ia}, \;z_{i a^{\prime}}< z_{ib^{\prime}}, \; z_{ib}>z_{i a^{\prime}}.
\end{array}
\end{equation}
Every relation $\varphi_{ij,ba}$ has leading monomial  $\LM (\varphi_{ii,ba})=  z_{ib}z_{ia}$.
Moreover, the set $\Re(\mathbb{A}_Z)$ forms a Gr\"{o}bner basis of the ideal $I=(\Re(\mathbb{A}_Z))$ of $\textbf{k}\langle z_{11}, z_{12}, \cdots, z_{mn}\rangle$
with respect to the degree-lexicographic order.
\end{pro}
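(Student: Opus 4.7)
The plan is to verify directly that, with the lexicographic enumeration of $Z$, the algebra $\mathbb{A}_Z$ satisfies the four conditions of Definition~\ref{binomialringdef}. The starting point is Remark~\ref{rmk:cartesianproductof solproperties}(3): since $(X,r_1)$ and $(Y,r_2)$ are square-free, so is their Cartesian product, and hence so is the isomorphic solution $(Z,r_Z)$. By Facts~\ref{fact1}(2) we already know that $\mathbb{A}_Z$ is a binomial skew polynomial ring with respect to \emph{some} enumeration of $Z$; the content of the proposition is that the lex enumeration works and produces the explicit relations displayed.

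The shape of the relations follows by applying Remark~\ref{remark} to $(Z, r_Z)$. In the lex order on $Z$, the inequality $z_{jb} > z_{ia}$ holds iff either $j > i$, or $j = i$ and $b > a$, so the defining relations split naturally into two families. Because $(X,r_1)$ is square-free, ${}^{x_i}x_i = x_i$, so when $j = i$ the map $r_Z$ fixes the first index and permutes the second via $r_2$, producing (\ref{eq:viprelationsZ26}); when $j \neq i$, the action on the first index is governed by $r_1$ and on the second by $r_2$, producing (\ref{eq:viprelationsZ1a6}). Condition (b) of Definition~\ref{binomialringdef} then reduces in each family to the analogous inequality for $A$ or $B$: in (\ref{eq:viprelationsZ1a6}) the inequalities $z_{jb} > z_{i'a'}$ and $z_{i'a'} < z_{j'b'}$ amount to $j > i'$ and $i' < j'$, which hold by property (b) applied to $A$; in (\ref{eq:viprelationsZ26}) they amount to $b > a'$ and $a' < b'$, which hold by property (b) applied to $B$. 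Condition (c) then follows by a cardinality argument: the $\binom{mn}{2}$ second terms are pairwise distinct ordered monomials (by condition (b) together with the injectivity of $r_Z$), matching the total count of lex-ordered monomials in $Z^2$, so every ordered monomial appears.

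For condition (d) the cleanest route is to verify the equivalent (d$'$): that the ordered terms in $z_{11}, \ldots, z_{mn}$ project to a $\textbf{k}$-basis of $\mathbb{A}_Z$. Spanning is immediate from the shape of the relations, and linear independence is forced by Facts~\ref{fact2}(2), which gives $H_{\mathbb{A}_Z}(t) = 1/(1-t)^{mn}$ and so matches the generating function for ordered terms in $mn$ variables in each degree. The Gr\"{o}bner basis claim is then a restatement of (d) via Remark~\ref{rmk:diamondlemma}. The main technical obstacle, though essentially bookkeeping, is ensuring that the split into (\ref{eq:viprelationsZ1a6}) and (\ref{eq:viprelationsZ26}) is exhaustive and disjoint---i.e.\ that all leading monomials are pairwise distinct and the cardinalities sum to $\binom{m}{2}n^2 + m\binom{n}{2} = \binom{mn}{2}$, which is exactly the number of nontrivial $r_Z$-orbits in $Z\times Z$.
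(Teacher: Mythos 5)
Your proof is correct and takes essentially the same route as the paper: both verify that the displayed relations have the shape required by conditions (a)--(c) of Definition \ref{binomialringdef}, and both settle the Gr\"obner-basis condition via (d$'$) by combining the containment of the normal words in the ordered terms (forced by the fact that the leading monomials exhaust all non-ordered length-two words) with the Hilbert series $H_{\mathbb{A}_Z}(t)=1/(1-t)^{mn}$ from Facts \ref{fact2}. The additional detail you give for conditions (b) and (c) is left implicit in the paper, but the decisive step --- the Hilbert-series count identifying $\cN(I)$ with $\cT(Z)$ --- is identical.
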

\begin{proof}
The relations $\Re(\mathbb{A}_Z)$ described with details in (\ref{eq:viprelationsZ1a6}) and (\ref{eq:viprelationsZ26})
have the shape of the typical relations of a binomial skew-polynomial ring, see Definition \ref{binomialringdef}, conditions (a), (b), (c).
We have to show that the set $\Re(\mathbb{A}_Z)$ is a Gr\"{o}bner basis of the ideal $I$ with respect to the degree-lexicographic order on $\langle z_{11}, z_{12}, \cdots, z_{mn}\rangle$.
It follows from the shape of relations that the set $\cN(I)$ of normal words modulo $I$ is a subset of the set of terms (ordered monomials)
\[
\cN(I) \subseteq \cT(Z)= \{z_{11}^{k_{11}}\cdots z_{mn}^{k_{mn}}\mid k_{ia}\geq 0,\; 1 \leq i \leq m,\; 1 \leq a \leq n  \}.
\]
By Facts \ref{fact2}  the Yang-Baxter algebra $\mathbb{A}_Z$ has Hilbert series $H_{\mathbb{A}} (t) = \frac{1}{(1-t)^{mn}}$ which implies that $\cN(I) = \cT(Z)$. In other words $\cT(Z)$ is the normal $\textbf{k}$-basis of
 $\mathbb{A}_Z$, so condition (d$^{\prime}$) in Definition \ref{binomialringdef} is satisfied, and therefore $\Re(\mathbb{A}_Z)$ is a Gr\"{o}bner basis of the ideal $I$.
 \end{proof}

\begin{cor}
\label{cor:segresqfree}
In notation as above. Let  $(X, r_1)$ and $(Y, r_2)$  be disjoint square-free solutions of finite orders, $m$ and $n$, respectively,
let $A $, and  $B $ be the corresponding YB-algebras.
Let  $(Z,r_Z)$ be the square-free solution on the set $Z = \{z_{11}, \cdots, z_{mn}\}$ defined in Definition-Notation \ref{def:Z}, and let  $\mathbb{A}= \cA(\textbf{k},  Z, r_Z)$ be its YB-algebra.
Let $s_{m, n} : \mathbb{A} \longrightarrow A\otimes_{\textbf{k}} B$ be the Segre map
 extending
 the assignment
 \[z_{11} \mapsto x_1\circ y_1, \;\; z_{12} \mapsto x_1\circ y_2,\;\;  \cdots, \;\; z_{mn} \mapsto x_m\circ y_n. \]
 \begin{enumerate}
 \item The image of the Segre map $s_{m, n}$ is the Segre product $A\circ B$.
 \item The kernel $\mathfrak{K}=\ker (s_{m,n})$ of the Segre map is generated by the set $\Re_s$  of $\binom{m}{2}\binom{n}{2}$
  linearly independent quadratic
binomials described below
\begin{equation}
  \label{eq:gammaij6}
  \begin{array}{lll}
\Re_s= &\{\gamma_{ij,ba} = z_{ib} z_{ja} -   z_{ia^{\prime}}z_{jb^{\prime}} \mid & 1\leq i<j\leq m, 1 \leq a< b\leq n \\
            &&  \;\text{and}\; \; r_2(y_by_a) = y_{a^{\prime}}y_{b^{\prime}}
\;  \text{with}\;\;b >a^{\prime}, a^{\prime} < b^{\prime}\}.
\end{array}
\end{equation}
  \end{enumerate}
\end{cor}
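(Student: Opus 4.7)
The plan is to deduce Corollary \ref{cor:segresqfree} directly from Theorem \ref{thm:segremap} by specializing the description of the kernel to the square-free setting. No new dimension count or linear independence argument is needed; the task is essentially a rewrite of the index conditions using the additional structure supplied by the binomial skew-polynomial presentations of $A$ and $B$.

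First I would note that the Segre map $s_{m,n}\colon \mathbb{A} \to A\otimes_{\textbf{k}} B$ is well defined by Lemma \ref{lem:segremap}, which makes no square-freeness hypothesis. Part (1) is then immediate: $s_{m,n}(Z) = W = X\circ Y$, and Theorem \ref{thm:segre_square-free}(1) asserts that $W$ generates $A\circ B$, so $\mathrm{Im}(s_{m,n}) = A\circ B$.

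For part (2) the content of Theorem \ref{thm:segremap}(2) transfers; what must be checked is that the set $\Re_s$ of Theorem \ref{thm:segremap}(2) coincides with the set (\ref{eq:gammaij6}) displayed in the corollary, that is, that the abstract conditions ``$r_1(x_ix_j) > x_ix_j$'' and ``$r_2(y_by_a) = y_{a'}y_{b'}$ with $b>a'$'' become the explicit orderings $1\le i<j\le m$ and $1\le a<b\le n$ together with the skew-polynomial constraints $a'<b'$ and $b>a'$. This relies on two observations. First, by Corollary \ref{cor:orbits_and_fixedpoints}, in a square-free solution the fixed-point set $\Fcal(X,r_1)$ equals the diagonal $\{x_ix_i\}$, so every off-diagonal pair lies in a nontrivial $r_1$-orbit. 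Second, because $A$ is a binomial skew-polynomial ring (Definition \ref{binomialringdef}), the leading monomials of $\Re_1$ are exactly the words $x_jx_i$ with $i<j$; hence for $i<j$ the word $x_ix_j$ is normal and $r_1(x_ix_j)$ is a leading monomial, therefore strictly larger than $x_ix_j$ in the degree-lexicographic order. The analogous statements for $(Y,r_2)$ complete the translation of indices.

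Once the two presentations of $\Re_s$ are identified, the inclusion $\Re_s\subset\ker(s_{m,n})$, the linear independence of $\Re_s$, and the equality $\dim(\ker s_{m,n})_2 = \binom{m}{2}\binom{n}{2} = |\Re_s|$ all follow from Theorem \ref{thm:segremap}(2) applied verbatim; this in turn implies $\ker(s_{m,n}) = (\Re_s)$, since the kernel is generated in degree $2$. The only step requiring any care, and the main (though minor) obstacle, is the index bookkeeping needed to confirm that the abstract orbit/order conditions in Theorem \ref{thm:segremap} reduce to the explicit inequalities $i<j$, $a<b$ in the presence of the binomial skew-polynomial structure; once that is done no further argument is required.
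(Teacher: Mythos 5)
Your proposal is correct and matches the paper's (implicit) route: the corollary is stated there as a direct specialization of Theorem \ref{thm:segremap}, and the only content to supply is exactly the index translation you carry out — namely that for a binomial skew-polynomial presentation the condition $r_1(x_ix_j)>x_ix_j$ is equivalent to $i<j$ (since the leading monomials of $\Re_1$ are precisely the $x_jx_i$ with $i<j$ and the diagonal consists of the fixed points), and likewise for $(Y,r_2)$. Nothing further is needed.
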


\section{An Example}
\label{sec:examples}
We shall present an example which illustrates the results of the paper. We use
the notation of the previous sections.

\begin{ex}[]\label{ex:n=3mod}
Let  $(X, r_1)$, be the square-free solution on the set $X= \{x_1, x_2, x_3\}$, where
\[
\begin{array}{llll}
r_1: &x_3x_2 \leftrightarrow  x_1x_3 & x_3x_1 \leftrightarrow
x_2x_3 & x_2x_1 \leftrightarrow  x_1x_2\\
     & x_ix_i \leftrightarrow x_ix_i&1 \leq i \leq 3. &\\
 \end{array}
\]
and let $(Y, r_2)$ be the solution on $Y=\{y_1, y_2\}$, where
\[
\begin{array}{llll}
r_2: &y_2y_2 \leftrightarrow y_1y_1& y_1y_2 \leftrightarrow y_1y_2 &  y_2y_1 \leftrightarrow y_2y_1.
      \end{array}
\]
Then
 \[
\begin{array}{l}
A=\cA(\textbf{k},X,r_1) = \textbf{k}\langle x_1, x_2, x_3  \rangle /(x_3x_2 -x_1x_3,\;  x_3x_1 -x_2x_3,\;   x_2x_1 -x_1x_2);\\
B=\cA(\textbf{k},Y,r_2) = \textbf{k}\langle y_1, y_2  \rangle /(y_2^2-y_1^2).
 \end{array}
\]
The algebra $A$ is a binomial skew-polynomial ring. $B$ is not a PBW algebra, see more details in \cite{GI22}.

Let $A\circ B$ be the Segre product of $A$ and $B$, and let $(X\circ Y, r_{X\circ Y})$ be the solution from Proposition-Notation \ref{pronotation} isomorphic to the Cartesian product of solutions $(X\times Y, \rho_{X\times Y})$.
Then $A\circ B$ is a quadratic algebra with a set of one-generators
\[W =\{ w_{11}=x_1\circ y_1, w_{12}=x_1\circ y_2, w_{21}=x_2\circ y_1, w_{12}=x_2\circ y_2, w_{31}=x_3\circ y_1, w_{32}=x_3\circ y_2\}\]
and $18$ defining quadratic relations. More precisely,
\[
A\circ B \simeq \textbf{k} \langle w_{11}, w_{12}, w_{21}, w_{22}, w_{31}, w_{32}\rangle/(\Re) ,
\]
where
$\Re= \Re_a \cup \Re_b$ is a disjoint union of quadratic relations $\Re_a$ and $\Re_b$ given below.
\begin{enumerate}
\item The set $\Re_a$ with $|\Re_a|=15$ is a disjoint union  $\Re_a= \Re_{a1} \cup
\Re_{a2}$, where
\[
\begin{array}{lll}
\Re_{a1}= \{&f_{32,22}=  w_{32}w_{22}-w_{11}w_{31},\quad f_{32,11}=  w_{31}w_{21}-w_{12}w_{32},&\\
            &f_{32,21}=  w_{32}w_{21}-w_{12}w_{31},\quad f_{32,12}=  w_{31}w_{22}-w_{11}w_{32}, &\\
            &f_{31,22}=  w_{32}w_{12}-w_{21}w_{31},\quad f_{31,11}=  w_{31}w_{11}-w_{22}w_{32}, &\\
            &f_{31,21}=  w_{32}w_{11}-w_{22}w_{31},\quad f_{31,12}=  w_{31}w_{12}-w_{21}w_{32}, &\\
            &f_{21,22}=  w_{22}w_{12}-w_{11}w_{21},\quad f_{21,11}=  w_{21}w_{11}-w_{12}w_{22}, &\\
            &f_{21,21}=  w_{22}w_{11}-w_{12}w_{21}, \quad f_{21,12}=  w_{21}w_{12}-w_{11}w_{22}& \}.\\
            &&\\
\Re_{a2}= \{& f_{33,22}=  w_{3 2}w_{3 2}-w_{31}w_{31},\quad f_{22,22}=  w_{22}w_{22}-w_{21}w_{21},& \\
            &f_{11,22}=  w_{12}w_{12}-w_{11}w_{11} \; \}.&
            \end{array}
\]
In fact the relations $\Re_a$  are exactly the defining relations of the YB-algebra $\Alg_{X\circ Y}= \cA(\textbf{k}, X\circ Y, r_{X\circ Y})$,
there is a one-to one correspondence between the set of relations $\Re_a$ and the set of nontrivial $r_{X\circ Y}$-orbits in
$(X\circ Y) \times (X\circ Y)$.
\item The set $\Re_b$ consists of $3$ quadratic relations given below
\[
\Re_{b}= \{g_{23,22}= w_{22}w_{32}-w_{21}w_{31},\;  g_{13,22}= w_{12}w_{32}-w_{11}w_{31},\;g_{12,22}= w_{12}w_{22}-w_{11}w_{21}\}.
\]
\end{enumerate}
Let
$Z=\{z_{11}, z_{12}, z_{21}, z_{22}, z_{31},z_{11},z_{31}\}$, and let $(Z, r_Z)$ be the solution
 defined in Definition-Notation \ref{def:Z}. By construction $(Z, r_Z)$ is isomorphic to the solution $(X\circ Y, r_{X\circ Y})$.

The Yang-Baxter algebra $\mathbb{A}_Z= \cA(\textbf{k},  Z, r_Z)$
has a finite  presentation
\[\mathbb{A}_Z = \textbf{k} \langle z_{11}, z_{12}, z_{21}, z_{22}, z_{31}, z_{32}\rangle/(\Re(\mathbb{A}_Z)),\]
where the set $\Re(\mathbb{A}_Z))$ of 15 defining relations is:
\[
\begin{array}{lll}
\Re(\mathbb{A}_Z)= \{&\varphi_{32,22}=  z_{32}z_{22}-z_{11}z_{31},\quad \varphi_{32,11}=  z_{31}z_{21}-z_{12}z_{32},&\\
            &\varphi_{32,21}=  z_{32}z_{21}-z_{12}z_{31},\quad \varphi_{32,12}=  z_{31}z_{22}-z_{11}z_{32}, &\\
            &\varphi_{31,22}=  z_{32}z_{12}-z_{21}z_{31},\quad \varphi_{31,11}=  z_{31}z_{11}-z_{22}z_{32}, &\\
            &\varphi_{31,21}=  z_{32}z_{11}-z_{22}z_{31},\quad \varphi_{31,12}=  z_{31}z_{12}-z_{21}z_{32}, &\\
            &\varphi_{21,22}=  z_{22}z_{12}-z_{11}z_{21},\quad \varphi_{21,11}=  z_{21}z_{11}-z_{12}z_{22}, &\\
            &\varphi_{21,21}=  z_{22}z_{11}-z_{12}z_{21}, \quad \varphi_{21,12}=  z_{21}z_{12}-z_{11}z_{22}, &\\
            & \varphi_{33,22}=  z_{3 2}z_{3 2}-z_{31}z_{31},\quad \varphi_{22,22}=  z_{22}z_{22}-z_{21}z_{21},& \\
            &\varphi_{11,22}=  z_{12}z_{12}-z_{11}z_{11} \; \}.&
            \end{array}
\]
The Segre map
$s_{3,2}:\; \mathbb{A}_Z \longrightarrow A\otimes B$
has image $A\circ B$. The kernel $\ker(s_{3,2})$
is the ideal of $\mathbb{A}_Z$ generated by the three polynomials
\[
\gamma_{23,22}= z_{22}z_{32}-z_{21}z_{31},\;  \gamma_{13,22}= z_{12}z_{32}-z_{11}z_{31},\;\gamma_{12,22}= z_{12}z_{22}-z_{11}z_{21}.
\]

 \end{ex}

\end{document}